\newcommand{\sprod}[2]{\langle {#1}, {#2}\rangle}
\newcommand{\norm}[1]{{\lVert #1 \rVert}}
\newcommand{\R}{\mathbb{R}}
\newcommand{\PP}{\mathbb{P}}
\newcommand{\EE}{\mathbb{E}}
\newcommand{\Per}{\mathrm{Per}}
\newcommand{\ud}{\mathrm{d}}
\newcommand{\pl}[1]{\foreignlanguage{polish}{#1}}
\def\beq{\begin{equation}}
\def\eeq{\end{equation}}
\newcommand{\eps}{\varepsilon}
\def\pf{\noindent{\bf Proof.} }
\def\qed{{\hfill $\Box$ \bigskip}}
\def\nn{\nonumber}
\newtheorem{theorem}{Theorem}
\newtheorem{proposition}[theorem]{Proposition}
\newtheorem{lemma}[theorem]{Lemma}
\newtheorem{corollary}[theorem]{Corollary}
\newtheorem{example}{Example}
\newtheorem{remark}[theorem]{Remark}
\title{Spectral Heat Content for L\'{e}vy Processes}
\author{Tomasz Grzywny, Hyunchul Park, and Renming Song}
\thanks{T.~Grzywny was supported in part by  National Science Centre (Poland): grant 2016/23/B/ST1/01665.
R.~Song is partially supported by a grant from the Simons Foundation (\#429343, Renming Song).}
\address{
	Tomasz Grzywny\\
	 \pl{Wydzia{\lll}} Matematyki, 
	Politechnika \pl{Wroc{\lll}awska},
 	Wyb. \pl{Wyspia\'{n}skiego} 27,
 	50-370 \pl{Wroc\l{}aw},
 	Poland}
\email{tomasz.grzywny@pwr.edu.pl}
\address{
Hyunchul Park\\
Department of Mathematics, State University of New York at New Paltz, NY 12561,
USA
}
\email{parkh@newpaltz.edu}
\address{
Renming Song\\
Department of Mathematics, University of Illinois, Urbana, IL 61801,
USA
}
\email{rsong@illinois.edu}
\subjclass[2010]{60G51, 60J75, 35K05} 
\keywords{heat content, spectral heat content, L\'{e}vy process, infinitesimal generator}
\begin{document}
\selectlanguage{english}
\begin{abstract}
In this paper we study the spectral heat content for various L\'evy processes. 
We establish the asymptotic behavior of the spectral heat content for L\'evy processes of bounded variation in $\R^{d}$, $d\geq 1$. 
We also study the spectral heat content for arbitrary open sets of finite Lebesgue measure in $\R$ with respect to L\'evy processes of unbounded variation under certain conditions on their characteristic exponents. 
Finally we establish that the asymptotic behavior of the spectral heat content is stable under integrable perturbations to the L\'evy measure.
\end{abstract}

\maketitle

\section{Introduction}
Let $\mathbf{X}=(X_t)_{t\geq 0}$ be a  L\'{e}vy process in $\R^d$. For any open
set $\Omega\subset\R^d$, the (regular) heat content  of $\Omega$ with respect to $\mathbf{X}$
is defined to be
$$
H_{\Omega}(t)=\int_\Omega \PP_x(X_t\in\Omega)\ud x,
$$
while the spectral heat content of $\Omega$ with respect to 
$\mathbf{X}$ is defined to be
$$
Q_{\Omega}(t):=\int_{\Omega}\PP_x(\tau_{\Omega}>t)\ud x,
$$
where $\tau_{\Omega}=\inf\{t>0:X_t\in\Omega^c\}$ is the first time the process
$\mathbf{X}$ exits $\Omega$.

The asymptotic behavior of 
the heat content and the spectral heat content
has been studied intensively in the case of Brownian motion, see \cite{Preunkert} and \cite{vanDenBerg_4}--\cite{vanDenBerg5}. 
Recently significant progress has also been made in studying the heat content
and the spectral heat content with respect to L\'evy processes, see
\cite{Valverde2, Valverde1, Valverde3, cg, MRT}. The asymptotic behavior of the
heat content and the spectral heat content with respect to symmetric stable 
processes was studied in \cite{Valverde2, Valverde1, Valverde3}. In particular,
the exact asymptotic behavior of the spectral heat content of bounded open intervals with respect to symmetric stable processes in $\R$ was established in
\cite{Valverde3}. The asymptotic behavior of the heat content with respect to 
general L\'evy processes was studied in \cite{cg}, see also \cite{wctg} 
for a generalization. 
In \cite{MRT}, an asymptotic expansion of the heat content with respect to some isotropic compound Poisson
processes with compactly supported jumping kernels was established.

The purpose of this paper is to investigate the asymptotic behavior of the spectral heat content of general L\'evy processes and generalize the results of 
\cite{Valverde3} in several directions.

The organization of this paper is as follows.
In Section \ref{section:preliminaries} we recall some notions and present some preliminaries.
In Section \ref{section:bounded variation} we first study the heat content with respect to L\'evy processes of bounded variation. 
In Theorem \ref{thm:HK on rough sets}, we extend \cite[Theorem 3]{cg} to
any open set of
finite Lebesgue measure and relax the finite perimeter condition. 
Then we use this 
to establish the asymptotic behavior of the spectral heat content for the same processes in Theorems \ref{thm:main1} and \ref{thm:main7}. 
In Section \ref{section:unbounded variation} we investigate the asymptotic behavior of the spectral heat content with respect to L\'evy processes of unbounded variation in $\R$.
In this section we deal with two cases separately. 
In the first case, we assume that the characteristic exponent $\psi(\xi)$ of $\mathbf{X}$ is regularly varying of index $\alpha$ at infinity for some $\alpha\in (1,2]$.
In the second case, we assume that $\mathbf{X}$ is a symmetric 1-stable process, that is, a Cauchy process. 
The main results in Section \ref{section:unbounded variation} are Theorems \ref{thm:general open sets} and \ref{thm:mainCauchy}, where we establish the exact asymptotic behavior of the spectral heat content with respect to such processes. 
We note here that the asymptotic behavior of $Q_{\Omega}(t)$ depends on the geometry of $\Omega$. When $\alpha\in (1,2]$, both the number of adjacent components and the number of non-adjacent components matter, while in the case $\alpha=1$, only the number of non-adjacent components matters since the process can not hit a single point upon exiting the open set.  
Two components of $\Omega$ are said to be adjacent if the distance between them is zero.
In Section \ref{section:perturbation} we study the stability  of the spectral heat content. We prove in Theorem \ref{thm:main2} that the asymptotic behavior of the spectral heat content is stable under integrable perturbations to the L\'evy measures.
In Section \ref{section:examples} we give some examples where one can apply the results of this paper to get the asymptotic behavior of the spectral heat content.

\section{Preliminaries}\label{section:preliminaries}
Let $\mathbf{X}=(X_t)_{t\geq 0}$ be a  L\'{e}vy process in $\R^d$.
We denote by $(P_t)$ the semigroup of $\mathbf{X}$ and by $\widehat{P}_t$ the adjoint operator of $P_t$.
The characteristic exponent $\psi (\xi)$, $\xi\in \R ^d$, 
of $\mathbf{X}$ is given by 
\begin{align*}
\psi(\xi) = \sprod{\xi}{A\xi} - i\sprod{\xi}{\gamma } - 
\int_{\R ^d}\left(e^{i\sprod{\xi}{y}} - 1 - i\sprod{\xi}{y} 
1_{\{\norm{y} \leq 1\}}\right) \nu(\ud y) ,
\end{align*}
where $A$ is a symmetric non-negative definite $d\times d$ matrix, $\gamma\in \R ^d$ and $\nu$ is a L\'{e}vy measure, that is
\begin{align*}
\nu(\{0\})=0\quad \mathrm{and}\quad \int_{\R ^d}\min \{1, \norm{y}^2\}\, \nu(\ud y) <\infty .
\end{align*}

The L\'{e}vy process $\mathbf{X}$ is of bounded variation (see \cite[Theorem 21.9]{Sato}) if and only if
\begin{align*}
A=0\quad \mathrm{and}\quad \int_{\norm{y}\leq 1}\norm{y}\nu (\ud y)<\infty .
\end{align*}
In this case the characteristic exponent has the following simple form
\begin{align*}
\psi (\xi) = i\sprod{\xi}{\gamma _0} + \int_{\R^d}\left( 1-e^{i\sprod{\xi}{y}}\right)\nu (\ud y),
\end{align*}
where 
 $
\gamma _0 = \int_{\norm{y}\leq 1}y\, \nu (\ud y) - \gamma .
$ 
For a L\'{e}vy process of bounded variation, the quantity $\gamma_0$ defined above is called the drift of the process.

We introduce the following function related to $\mathbf{X}$, see \cite{Pruitt}. For any $r>0$,
\begin{align*}
h(r)= \norm{A}r^{-2} + 
 r^{-1} \Big\lvert \gamma +\int_{\R ^d} y \left(
1_{\{\norm{y} < r\}}-1_{\{\norm{y} < 1\}}\right)
 \nu(\ud y)\Big\rvert  
 +\int_{\R ^d} \min \{1, \norm{y}^2r^{-2} \}\, \nu(\ud y).
\end{align*}
Recall that there exists $C_1=C_1(d)>0$ (see \cite[page 941]{Pruitt}) such that
\begin{equation}\label{eq:Pruitt}
\PP(\sup_{s\leq t}|X_s|>r)\leq C_1th(r).
\end{equation}

Following \cite[Section 3.3]{Ambrosio_2000}, for any  
Borel set $\Omega \subset \R ^d$, we define its perimeter $\Per(\Omega)$ as
\begin{align*}
\Per(\Omega) = \sup \left\{ \int_{\R^d}
1_{\Omega}(x)\mathrm{div}\, \phi (x)\, \ud x:
\, \phi \in C_c^1(\R^d,\R^d),\, \norm{\phi}_{\infty}\leq 1 \right\}.
\end{align*}
We say that $\Omega$ is of finite perimeter if $\Per(\Omega)<\infty$. 
It was shown \cite{Miranda1, Miranda2, Preunkert} that if $\Omega\subset \R^d$ is an open set of  finite Lebesgue measure and of finite perimeter, then
\begin{align*}
\Per(\Omega) = \pi^{1/2}\lim_{t\to 0}t^{-1/2}\int_{\Omega}\int_{\Omega^c}p_t^{(2)}(x,y)\, \ud y\, \ud x,
\end{align*}
where
\begin{align*}
p_t^{(2)}(x,y) = (4\pi t)^{-d/2}e^{-\norm{x-y}^2/4t}
\end{align*}
is the transition density of the
Brownian motion $\mathbf{B}=(B_t)_{t\ge 0}$ in $\R^d$. 
For a L\'{e}vy process $\mathbf{X}$ with L\'{e}vy measure $\nu$, 
we define the perimeter $\Per_{\mathbf{X}}(\Omega)$ with respect to 
$\mathbf{X}$ as
\begin{align*}
\Per_{\mathbf{X}}(\Omega)= \int_{\Omega}\int_{\Omega ^c-x}\nu (\ud y)\, \ud x .
\end{align*}
In particular, to the isotropic (rotationally invariant) $\alpha$-stable 
process $\mathbf{S}^{(\alpha)}=(S^{(\alpha)}_t)_{t\geq 0}$, $0<\alpha <1$, one associates the $\alpha$-perimeter which is defined as
\begin{align*}
\Per_{\mathbf{S}^{(\alpha)}}(\Omega) :=
\int_{\Omega}\int_{\Omega ^c} 
\frac{c(d,\alpha)\ud y\, \ud x}{\norm{x-y}^{d+\alpha}},
\end{align*}
where
$$
c(d, \alpha):= \frac{ \alpha  \,
\Gamma(\frac{d+\alpha}2)} {2^{1-\alpha} \,
\pi^{d/2}\Gamma(1-\frac{\alpha}2)}.
$$
It is known, cf. \cite[Lemma 1]{cg} (see also \cite{Fusco} for the perimeter for the isotropic stable processes), that if $\mathbf{X}$ is of bounded variation
and $\Omega\subset \R^d$ is an open set of finite  Lebesgue measure and of
finite perimeter, then $\Per_{\mathbf{X}}(\Omega)$ is also finite.

Let $G\subset \R^{d}$ be an open set  
and $f:G\rightarrow \R$ be integrable.
The total variation of $f$ in $G$ is 
$$
V(f,G)=\sup\{\int_{G}f(x)\text{div}\varphi(x)\ud x : \varphi\in C_{c}^{1}(G,\R^{d}), \|\varphi\|_{\infty}\leq 1\}.
$$
The directional derivative of $f$ in $G$ in the direction of $u\in\mathbb{S}^{d-1}$ is 
$$
V_{u}(f,G)=\sup\{\int_{G}f(x)\langle\nabla\varphi(x),u\rangle\ud x : \varphi\in C_{c}^{1}(G,\R^{d}), \|\varphi\|_{\infty}\leq 1\}.
$$
We will use $V_{u}(\Omega)$ to denote $V_{u}(1_{\Omega},\R^{d})$.

Now we recall the covariogram function $g_{\Omega}(y)$. Let 
$$
g_{\Omega}(y)=|\Omega\cap (\Omega+y)|=\int_{\R^{d}}1_{\Omega}(x)1_{\Omega+y}(x)\ud x=\int_{\R^{d}}1_{\Omega}(x)1_{\Omega}(x-y)\ud x.
$$
It is easy to see that
$$
g_{\Omega}(y)\leq g_{\Omega}(0)=|\Omega| \quad \text{ and } \quad g_{\Omega}(-y)=g_{\Omega}(y).
$$
Moreover $g_\Omega\in C_0(\R^d)$ (see \cite[Proposition 2]{Galerne}).

Let
$$
f_{\Omega}(y):=g_{\Omega}(0)-g_{\Omega}(y).
$$
By the Fubini-Tonelli theorem we have the following relationship between 
$\text{Per}_{\mathbf{X}}(\Omega)$ and $f_{\Omega}(\cdot)$:
\begin{equation}\label{eqn:perimeter}
{\rm Per}_{\mathbf{X}}(\Omega)=\int_{\R^{d}}f_{\Omega}(y)\nu(\ud y).
\end{equation}

Here is a simple lemma about the behavior of $f_{\Omega}(y)$ as $|y|\rightarrow 0$.
\begin{lemma}\label{lemma:limit=0}
If $\Omega\subset \R^{d}$ is an open set of finite Lebesgue measure $|\Omega|$, then
$$
\lim_{|y|\rightarrow 0}f_{\Omega}(y)=0.
$$
\end{lemma}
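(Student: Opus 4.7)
The plan is to reduce the statement to the continuity of translation in $L^{1}(\R^{d})$. Observe that
\[
f_{\Omega}(y) = g_{\Omega}(0) - g_{\Omega}(y) = \int_{\R^{d}} 1_{\Omega}(x)\bigl(1-1_{\Omega}(x-y)\bigr)\ud x,
\]
so $f_{\Omega}(y) = |\Omega \setminus (\Omega+y)|$. My first step is the pointwise bound
\[
0 \le 1_{\Omega}(x)\bigl(1-1_{\Omega}(x-y)\bigr) \le \bigl| 1_{\Omega}(x) - 1_{\Omega}(x-y)\bigr|,
\]
which is immediate by checking the four possible values of the pair $(1_{\Omega}(x), 1_{\Omega}(x-y))$.

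Integrating and using that $|\Omega|<\infty$ implies $1_{\Omega}\in L^{1}(\R^{d})$, I would then invoke the continuity of translation in $L^{1}$:
\[
\lim_{|y|\to 0}\int_{\R^{d}}\bigl| 1_{\Omega}(x) - 1_{\Omega}(x-y)\bigr|\ud x = 0.
\]
Combining this with the bound above yields $\lim_{|y|\to 0}f_{\Omega}(y)=0$, which is the claim. Alternatively, one can simply appeal to the already quoted fact that $g_{\Omega}\in C_{0}(\R^{d})$ from \cite[Proposition 2]{Galerne}, since continuity of $g_{\Omega}$ at the origin is precisely the statement of the lemma.

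There is no real obstacle here; the only delicate point is that one must avoid arguing by pointwise a.e.\ convergence of $1_{\Omega}(x-y)$ to $1_{\Omega}(x)$, which is not automatic for an arbitrary open set (the boundary $\partial\Omega$ need not have Lebesgue measure zero). The $L^{1}$-continuity of translation sidesteps this issue entirely and requires no regularity of $\Omega$ beyond finite measure.
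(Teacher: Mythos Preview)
Your proof is correct. The paper takes a closely related but slightly different route: it notes that for each fixed $x\in\Omega$ (not for all $x\in\R^{d}$) the openness of $\Omega$ forces $x-y\in\Omega$ once $|y|$ is small, so $1_{\Omega}(x)1_{\Omega^{c}}(x-y)\to 0$ pointwise on $\Omega$, and then applies the dominated convergence theorem with dominating function $1_{\Omega}$. Thus the pointwise approach you warn against does work here, precisely because the integrand carries the factor $1_{\Omega}(x)$ and $\Omega$ is open; your concern about $|\partial\Omega|>0$ never enters, since convergence at boundary points is not needed. Your route via $L^{1}$-continuity of translation has the modest advantage that it does not use openness at all (it applies to any measurable set of finite measure), and your alternative of simply invoking $g_{\Omega}\in C_{0}(\R^{d})$ from \cite[Proposition~2]{Galerne} is the shortest path given what the paper already quotes.
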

\pf
Note that $1_{\Omega}(x)1_{\Omega^{c}}(x-y)\leq 1_{\Omega}(x)$ for all $y\in \R^{d}$ and $\int1_{\Omega}(x)\ud x=|\Omega|< \infty$.
For each $x\in \Omega$ we have
$$
\lim_{|y|\rightarrow 0} 1_{\Omega}(x)1_{\Omega^{c}}(x-y)=0.
$$
Hence the assertion follows from the dominated convergence theorem.
\qed

The next lemma follows from \cite[Proposition 5]{Galerne}.
\begin{lemma}\label{lemma:triangular inequality}
If $\Omega\subset \R^{d}$ is a
Borel  set of finite Lebesgue measure, then
$$
\left|g_{\Omega}(x)-g_{\Omega}(y)\right|\leq g_{\Omega}(0)-g_{\Omega}(x-y), \qquad x,y\in \R^{d}.
$$
\end{lemma}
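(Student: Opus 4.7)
The plan is to work directly with the set-theoretic meaning of the covariogram. Write $A := \Omega \cap (\Omega + x)$ and $B := \Omega \cap (\Omega + y)$, so that $g_\Omega(x) = |A|$ and $g_\Omega(y) = |B|$. First I would record the identity
$$ g_\Omega(0) - g_\Omega(z) = |\Omega| - |\Omega \cap (\Omega + z)| = |\Omega \setminus (\Omega + z)|, $$
valid for any $z \in \R^d$, which by translation invariance of Lebesgue measure also equals $|(\Omega + u) \setminus (\Omega + u + z)|$ for every $u \in \R^d$.

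Next I would use the elementary relation $|A| - |B| = |A \setminus B| - |B \setminus A|$, which yields
$$ \bigl| g_\Omega(x) - g_\Omega(y) \bigr| \leq \max\{|A \setminus B|,\ |B \setminus A|\}. $$
The key observation is that because both $A$ and $B$ contain the factor $\Omega$, a point $z \in A \setminus B$ satisfies $z \in \Omega$, $z \in \Omega + x$, and $z \notin \Omega + y$; hence
$$ A \setminus B \subseteq (\Omega + x) \setminus (\Omega + y), $$
and symmetrically $B \setminus A \subseteq (\Omega + y) \setminus (\Omega + x)$. Applying the identity above (with $u = x$, $z = y-x$, and using $g_\Omega(-z) = g_\Omega(z)$) shows that each of these two translate-only differences has measure exactly $g_\Omega(0) - g_\Omega(x-y)$, and combining gives the claim.

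The argument is short and essentially combinatorial, so I do not anticipate a real obstacle; the only substantive point is the recognition that the common factor $\Omega$ in the definitions of $A$ and $B$ forces the set difference $A \setminus B$ to lie inside $(\Omega + x) \setminus (\Omega + y)$, whose Lebesgue measure is precisely $g_\Omega(0) - g_\Omega(x-y)$. No additional regularity of $\Omega$ beyond Borel measurability and finiteness of $|\Omega|$ is needed.
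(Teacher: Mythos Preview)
Your argument is correct. The paper does not actually prove this lemma; it simply cites \cite[Proposition 5]{Galerne}. Your proposal supplies a clean self-contained proof: the set-theoretic inclusion $A\setminus B\subseteq (\Omega+x)\setminus(\Omega+y)$ (forced by the common factor $\Omega$) together with translation invariance and the evenness of $g_\Omega$ gives the bound directly. This is essentially the same idea underlying Galerne's result, so there is no genuine divergence in approach---you have just written out what the paper leaves to the reference.
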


We end this section by recalling the concept of regularly varying functions. A function
$f$ is said to be regularly varying of index $\alpha$ at infinity if for any $\lambda>0$,
$$
\lim_{r\to\infty}\frac{f(\lambda r)}{f(r)}=\lambda^\alpha.
$$
The family of  regularly varying functions of index $\alpha$ at infinity is denoted by
$\mathcal{R}_{\alpha}$.

\section{Processes of bounded variation in $\R^d$}\label{section:bounded variation}
In this section we assume that $\mathbf{X}$ is a L\'{e}vy process of bounded variation in $\R^d$. 
\subsection{Heat content}
In this subsection, we first extend \cite[Theorem 3]{cg} when the drift $\gamma_0=0$.
Suppose that $\mathbf{X}$ is a purely discontinuous  L\'evy 
process of bounded variation in $\R^d$, that is,  $A=0$, $\gamma_0=0$ and $\int_{\norm{x}\leq 1}\norm{x}\nu (\ud x)<\infty$.

The infinitesimal generator  $\mathcal{L}$ on $C_0(\R^d)$ of $\mathbf{X}$ is
a linear operator defined by
\begin{align}\label{gener}
\mathcal{L}f(x) = \lim_{t\to 0}\frac{ P_tf(x) - f(x) }{t},
\end{align}
with domain $\mathrm{Dom}(\mathcal{L})$ consisting of all functions $f$ such that the right hand side of \eqref{gener} exists.
By \cite[Theorem 31.5]{Sato}, we have $C_0^2(\R^d)\subset \mathrm{Dom}(\mathcal{L})$. 
For a detailed discussion on infinitesimal generators of L\'{e}vy processes we refer the reader to \cite[Section 31]{Sato}.
Since $\mathbf{X}$ is of bounded variation and $\gamma_0=0$, again by \cite[Theorem 31.5]{Sato} we have,  for any $f\in C_0^2(\R^d)$,
\begin{align}\label{eq:gener_finvar}
\mathcal{L} f(x) &= \int_{\R^d} \left(f(x+y)-f(x)\right) \nu(\ud y).
\end{align}

The next lemma  corresponds to \cite[Lemma 2]{cg}.
\begin{lemma}\label{lemma:domain}
Suppose that $\Omega\subset \R^{d}$ is an open set of finite Lebesgue measure. If
$$
\int_{\R^{d}}f_{\Omega}(y)\nu(\ud y)<\infty, 
$$
then $g_{\Omega}(x)\in 
{\rm Dom}(\mathcal{L})$ and 
$$
\mathcal{L}g_{\Omega}(x)=\int_{\R^{d}}\left(g_{\Omega}(x+y)-g_{\Omega}(x)\right)\nu(\ud y).
$$
\end{lemma}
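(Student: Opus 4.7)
The plan is to define
\[
\tilde{\mathcal{L}} g_\Omega(x) := \int_{\R^d} \bigl(g_\Omega(x+y) - g_\Omega(x)\bigr)\,\nu(\ud y),
\]
verify that this is a well-defined element of $C_0(\R^d)$, and then identify $\mathcal{L} g_\Omega$ with it by a mollification argument combined with the closedness of the Feller generator on $C_0(\R^d)$. The key pointwise bound is Lemma \ref{lemma:triangular inequality} applied with the pair $(x+y, x)$, which yields
\[
|g_\Omega(x+y) - g_\Omega(x)| \leq g_\Omega(0) - g_\Omega(y) = f_\Omega(y).
\]
Since $f_\Omega \in L^1(\nu)$ by hypothesis, $\tilde{\mathcal{L}} g_\Omega$ is bounded by $\int_{\R^d} f_\Omega(y)\,\nu(\ud y)$; continuity (indeed vanishing at infinity) of $\tilde{\mathcal{L}} g_\Omega$ then follows from dominated convergence together with $g_\Omega \in C_0(\R^d)$.

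Next, let $\phi_\epsilon$ be a standard smooth mollifier and set $g_\epsilon := g_\Omega * \phi_\epsilon$. Then $g_\epsilon \in C_0^\infty(\R^d) \subset C_0^2(\R^d) \subset \mathrm{Dom}(\mathcal{L})$, so formula \eqref{eq:gener_finvar} gives
\[
\mathcal{L} g_\epsilon(x) = \int_{\R^d} \bigl(g_\epsilon(x+y) - g_\epsilon(x)\bigr)\,\nu(\ud y).
\]
Writing $g_\epsilon(x+y) - g_\epsilon(x) = \int [g_\Omega(x+y-z) - g_\Omega(x-z)]\phi_\epsilon(z)\,\ud z$ and invoking Lemma \ref{lemma:triangular inequality} inside the integral preserves the dominating bound, so $|g_\epsilon(x+y) - g_\epsilon(x)| \leq f_\Omega(y)$ uniformly in $\epsilon$. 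Standard properties of mollification also yield $g_\epsilon \to g_\Omega$ uniformly on $\R^d$.

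The crux — and the main technical obstacle — is to upgrade the easy pointwise convergence $\mathcal{L} g_\epsilon(x) \to \tilde{\mathcal{L}} g_\Omega(x)$ (immediate from dominated convergence) to uniform convergence, since it is uniform convergence that closedness of the Feller generator requires. Setting $h_\epsilon := g_\epsilon - g_\Omega$,
\[
|\mathcal{L} g_\epsilon(x) - \tilde{\mathcal{L}} g_\Omega(x)| \leq \int_{\R^d} |h_\epsilon(x+y) - h_\epsilon(x)|\,\nu(\ud y),
\]
and the integrand admits the two bounds $2\|h_\epsilon\|_\infty$ and $2 f_\Omega(y)$, so it is dominated by $2\min(\|h_\epsilon\|_\infty, f_\Omega(y))$ independently of $x$. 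Since $\|h_\epsilon\|_\infty \to 0$ and $2 f_\Omega$ is a $\nu$-integrable envelope, dominated convergence produces a bound independent of $x$ that tends to $0$. With the uniform convergence $\mathcal{L} g_\epsilon \to \tilde{\mathcal{L}} g_\Omega$ in hand, closedness of $\mathcal{L}$ on $C_0(\R^d)$ delivers $g_\Omega \in \mathrm{Dom}(\mathcal{L})$ and $\mathcal{L} g_\Omega = \tilde{\mathcal{L}} g_\Omega$, as desired.
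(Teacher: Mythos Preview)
Your proof is correct and follows essentially the same route as the paper's: mollify $g_\Omega$, use Lemma~\ref{lemma:triangular inequality} to get the uniform $f_\Omega(y)$ bound on increments, and conclude via dominated convergence and closedness of $\mathcal{L}$. If anything, your treatment of the uniform convergence $\mathcal{L}g_\epsilon \to \tilde{\mathcal{L}}g_\Omega$ via the envelope $2\min(\|h_\epsilon\|_\infty, f_\Omega(y))$ is more explicit than the paper's, which simply invokes dominated convergence at that step.
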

\pf
Fix a cut-off function $\phi\in C_{c}^{\infty}(\R^{d})$  such that $\phi\geq0$, $\mathrm{supp}\phi\subset B(0,1)$ and  $\|\phi\|_{1}=1$.  Let $\phi_{\eps}(x):=\frac{1}{\eps^{d}}\phi(\frac{x}{\eps})$.
Let $g_{\Omega}^{\eps}(x):=g_{\Omega}*\phi_{\eps}(x)$.
Since $g_\Omega$ is integrable, we get that $g_{\Omega}^{\eps}$ is smooth and vanishes at infinity. Hence $f_{\Omega}^{\eps}\in C_{0}^{\infty}(\R^{d})$.
Since 
$C_{0}^{2}(\R^{d})\subset \text{Dom}(\mathcal{L})$, we get 
$g_{\Omega}^{\eps}\in \text{Dom}(\mathcal{L})$.
Note that it follows from Lemma \ref{lemma:triangular inequality} that
\begin{align*}
|g_{\Omega}^{\eps}(x)-g_{\Omega}(x)|
=\left|\int_{\R^{d}}\phi_{\eps}(y)\left(g_{\Omega}(x-y)-g_{\Omega}(x)\right)\ud y\right|
&\leq \sup_{|y|\leq\eps}|g_\Omega(0)-g_{\Omega}(y)\|\phi_{\eps}\|_{1}
=\sup_{|y|\leq\eps}|f_{\Omega}(y)|.
\end{align*}
 Lemma \ref{lemma:limit=0} implies  
$$
\lim_{\eps\downarrow 0}||g_{\Omega}^{\eps}-g_{\Omega}||_{\infty}=0.
$$
By \eqref{eq:gener_finvar},
$$\mathcal{L}g_{\Omega}^{\eps}(x)=\int_{\R^d}(g_{\Omega}^\eps(x+y)-g_{\Omega}^\eps(x))\nu(\ud y).$$
Note that from Lemma \ref{lemma:triangular inequality} we have
$$
|g^\eps_\Omega(x+y)-g^\eps_{\Omega}(x)|\leq f_\Omega(y),\quad x,y\in\R^d.
$$ 
Hence,
by the dominated convergence theorem  we infer that
$$
\lim_{\eps\downarrow 0}\|\mathcal{L}g_{\Omega}^{\eps}-\int_{\R^{d}}\left(g_{\Omega}(\cdot+y)-g_{\Omega}(\cdot)\right)\nu(\ud y)\|_\infty=0.
$$
Since $\mathcal{L}$ is a closed operator, the assertion of the lemma is now established. 
\qed

The following result is similar in spirit to \cite[Theorem 3]{cg}.
The difference is that in the result below we do not assume ${\rm Per}(\Omega)<\infty$ but we assume that $\gamma_0=0$.

\begin{theorem}\label{thm:HK on rough sets}
Let $\mathbf{X}$ be a L\'{e}vy process of bounded variation with $\gamma_0=0$.
If $\Omega\subset \R^{d}$ is an open set of finite Lebesgue measure,
then we have
$$
\lim_{t\rightarrow 0}\frac{|\Omega|-H_{\Omega}(t)}{t}={\rm Per}_{\mathbf{X}}(\Omega).
$$
\end{theorem}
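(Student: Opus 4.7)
My plan is to first rewrite $|\Omega| - H_\Omega(t)$ as a semigroup object. By Fubini and the spatial homogeneity of $\mathbf{X}$,
\begin{align*}
H_\Omega(t) = \int_\Omega \EE[1_\Omega(x+X_t)]\,\ud x = \EE\Big[\int_{\R^d}1_\Omega(x)1_\Omega(x+X_t)\,\ud x\Big] = \EE[g_\Omega(X_t)] = P_t g_\Omega(0),
\end{align*}
using the evenness of $g_\Omega$. Since $g_\Omega(0) = |\Omega|$ and $f_\Omega = g_\Omega(0) - g_\Omega$, this rewrites the left side of the claim as $(g_\Omega(0) - P_t g_\Omega(0))/t = \EE[f_\Omega(X_t)]/t$. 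The argument then splits on whether $\Per_{\mathbf{X}}(\Omega)$ is finite.

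If $\Per_{\mathbf{X}}(\Omega) = \int f_\Omega\,\ud\nu$ is finite (recall \eqref{eqn:perimeter}), then Lemma \ref{lemma:domain} applies and gives $g_\Omega \in \mathrm{Dom}(\calL)$ with
\[
\calL g_\Omega(0) = \int_{\R^d}(g_\Omega(y)-g_\Omega(0))\,\nu(\ud y) = -\Per_{\mathbf{X}}(\Omega).
\]
By the very definition \eqref{gener} of the infinitesimal generator, $(P_t g_\Omega(0) - g_\Omega(0))/t \to \calL g_\Omega(0)$ as $t\to 0$, which after a sign change yields the claim.

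If $\Per_{\mathbf{X}}(\Omega) = +\infty$, Lemma \ref{lemma:domain} is unavailable and the goal reduces to showing $\liminf_{t\to 0}\EE[f_\Omega(X_t)]/t = +\infty$. I would use a single-jump compound-Poisson approximation: for fixed $\eps>0$, decompose $\mathbf{X}\stackrel{d}{=}\mathbf{X}^\eps+\mathbf{Y}^\eps$ into independent L\'evy processes with L\'evy measures $\nu|_{\{|y|>\eps\}}$ and $\nu|_{\{|y|\leq \eps\}}$ respectively, both with zero drift (this is where the hypothesis $\gamma_0=0$ is essential). The first is compound Poisson of rate $\lambda_\eps := \nu(\{|y|>\eps\})$; letting $N^\eps_t$ count its jumps and $\mu^\eps_t$ denote the law of $Y^\eps_t$, the nonnegativity of $f_\Omega$ together with restriction to the single-jump event give
\[
\EE[f_\Omega(X_t)] \geq \EE[f_\Omega(X_t);\, N^\eps_t=1] = te^{-\lambda_\eps t}\int_{\{|z|>\eps\}}\int_{\R^d}f_\Omega(y'+z)\,\mu^\eps_t(\ud y')\,\nu(\ud z).
\]
Because $f_\Omega$ is continuous and bounded by $|\Omega|$ (as $g_\Omega \in C_0(\R^d)$) and $Y^\eps_t\to 0$ in probability as $t\to 0$, the inner integral tends to $f_\Omega(z)$ pointwise; dominated convergence with majorant $|\Omega|$ against the finite measure $\nu|_{\{|z|>\eps\}}$ then yields $\liminf_{t\to 0}\EE[f_\Omega(X_t)]/t \geq \int_{\{|z|>\eps\}}f_\Omega(z)\,\nu(\ud z)$, and monotone convergence as $\eps\to 0$ pushes this up to $\Per_{\mathbf{X}}(\Omega) = +\infty$. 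The main obstacle will indeed be this infinite-perimeter case: without the generator identity one must build a robust lower bound from the continuity of $f_\Omega$ alone, and the single-large-jump restriction is exactly what isolates a contribution of order $t$ while absorbing the small-jump noise via $\mu^\eps_t\to\delta_0$.
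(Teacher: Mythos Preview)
Your proof is correct. The finite-perimeter case is identical to the paper's: rewrite $|\Omega|-H_\Omega(t)=g_\Omega(0)-P_tg_\Omega(0)$, invoke Lemma~\ref{lemma:domain}, and read off the limit from the definition of the generator.

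For the infinite-perimeter case, the paper takes a shorter route: it writes $\frac{|\Omega|-H_\Omega(t)}{t}=\int f_\Omega(y)\,\frac{p_t(\ud y)}{t}$, introduces a continuous cutoff $\phi_\eps$ with $1_{B(0,\eps)^c}\le\phi_\eps\le 1_{B(0,\eps/2)^c}$, and appeals directly to \cite[Corollary~8.9]{Sato}, which gives $\lim_{t\to0}\int \phi_\eps f_\Omega\,\frac{p_t}{t}=\int \phi_\eps f_\Omega\,\ud\nu$ for bounded continuous integrands vanishing near the origin. This yields $\liminf\ge\int_{B(0,\eps)^c}f_\Omega\,\ud\nu$ in one line, and then $\eps\downarrow0$. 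Your compound-Poisson decomposition with the single-jump restriction is in effect a hands-on proof of the relevant instance of that vague-convergence statement: conditioning on $\{N^\eps_t=1\}$ isolates the $t$-order contribution $te^{-\lambda_\eps t}\int_{|z|>\eps}\EE[f_\Omega(Y^\eps_t+z)]\,\nu(\ud z)$, and $\mu^\eps_t\Rightarrow\delta_0$ plus boundedness and continuity of $f_\Omega$ give the same lower bound. Your argument is more self-contained and makes the role of $\gamma_0=0$ visible in the decomposition, at the cost of a few extra lines; the paper's is terser but relies on the cited result.
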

\pf
First we assume that 
$${\rm Per}_{\mathbf{X}}(\Omega)=\int_{\Omega}\nu(\Omega^c- y)\ud y<\infty.$$
In this case the proof is similar to that of \cite[Theorem 3]{cg}.
It follows from Lemma \ref{lemma:domain} and \eqref{eqn:perimeter} that
\begin{align*}
&\lim_{t\rightarrow 0}\frac{|\Omega|-H_{\Omega}(t)}{t}=\lim_{t\rightarrow 0}\int_{\Omega}\frac{1-\PP(x+X_{t}\in \Omega)}{t}\ud x\\
&=\lim_{t\rightarrow 0}\frac{g_{\Omega}(0)-P_{t}g_{\Omega}(0)}{t}=-\mathcal{L}g_{\Omega}(0)=\int_{\R^{d}}f_{\Omega}(y)\nu(\ud y)={\rm Per}_{\mathbf{X}}(\Omega).
\end{align*}

Now we deal with the case when
$$
\int_{\Omega}\nu(\Omega^c- y)\ud y=\infty.
$$
Note that
$$
\frac{|\Omega|-H_{\Omega}(t)}{t}=\int_{\R^{d}}f_{\Omega}(y)\frac{p_t(\ud y)}{t}.
$$
Let $\varepsilon>0$, and $\phi_\varepsilon \in C_b(\R^d)$ be  such  that $1_{B(0,\eps)^c}\leq \phi_\eps\leq 1_{ B(0,\eps/2)^c}$.
Then by \cite[Corollary 8.9]{Sato} we get
$$
\liminf_{t\rightarrow 0}\frac{|\Omega|-H_{\Omega}(t)}{t}\geq \liminf_{t\rightarrow 0}\int_{\R^{d}}\phi_\eps(y)f_{\Omega}(y)\frac{p_t(\ud y)}{t}=\int_{\R^{d}}\phi_\eps(y)f_{\Omega}(y)\nu(\ud y)\geq \int_{B(0,\eps)^c}f_{\Omega}(y)\nu(\ud y).
$$
Since $\eps$ is arbitrary, we have
$$
\liminf_{t\rightarrow 0}\frac{|\Omega|-H_{\Omega}(t)}{t}=\infty.$$
\qed

\subsection{Spectral heat content}
In this subsection we study the asymptotic behavior of the spectral heat content for L\'evy processes of bounded variation.
The main result is the following theorem.

\begin{theorem}\label{thm:main1}
Let $\mathbf{X}$ be a L\'{e}vy process of bounded variation in $\R^d$.
If $\Omega \subset \R^d$ is an open set of finite measure $|\Omega| $ and of finite perimeter $\Per (\Omega)$,  then
\begin{align*}
\limsup_{t\to 0}\frac{|\Omega|-Q_{\Omega}(t)}{t} = &\Per_{\mathbf{X}}(\Omega) +
\norm{\gamma_0}V_{\frac{\gamma_0}{\norm{\gamma_0}}}(\Omega)/2\\
&+\limsup_{t\to0}\frac{1}{t}\int_{\Omega}\EE_x\left(\tau_{\Omega}<t, 
X_{\tau_{\Omega}}\in \partial\Omega;\PP_{X_{\tau_{\Omega}}}
\left(X_{t-\tau_{\Omega}}\in \Omega\right)\right)\ud x,\\
\liminf_{t\to 0}\frac{|\Omega|-Q_{\Omega}(t)}{t} = &\Per_{\mathbf{X}}(\Omega) +
\norm{\gamma_0}V_{\frac{\gamma_0}{\norm{\gamma_0}}}(\Omega)/2\\
&+\liminf_{t\to0}\frac{1}{t}\int_{\Omega}\EE_x\left(\tau_{\Omega}<t, 
X_{\tau_{\Omega}}\in \partial\Omega;\PP_{X_{\tau_{\Omega}}}
\left(X_{t-\tau_{\Omega}}\in \Omega\right)\right)\ud x,
\end{align*}
where $V_u(\Omega)$ is the directional derivative of 
$1_{\Omega}$ in the direction $u$ on the unit sphere in $\R^d$.
\end{theorem}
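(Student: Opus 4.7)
The plan is to decompose $|\Omega|-Q_{\Omega}(t)$ into a heat-content piece plus a return-after-exit piece, and analyze each. Since $\{X_t\notin\Omega\}\subset\{\tau_\Omega\leq t\}$, one has
\begin{equation*}
|\Omega|-Q_{\Omega}(t)=\bigl(|\Omega|-H_{\Omega}(t)\bigr)+\int_{\Omega}\PP_x\bigl(\tau_{\Omega}\leq t,\,X_t\in\Omega\bigr)\,\ud x.
\end{equation*}
Applying the strong Markov property at $\tau_\Omega$ and splitting by whether $X_{\tau_\Omega}\in\partial\Omega$ or $X_{\tau_\Omega}\in(\overline{\Omega})^c$, the last integral becomes $R_{\partial}(t)+R_{ext}(t)$, where $R_{\partial}(t)$ coincides with the third term in the theorem statement and $R_{ext}(t)$ is its $(\overline{\Omega})^c$-analogue. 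Thus, after dividing by $t$ and passing to $\limsup$ (respectively $\liminf$), the theorem reduces to showing (a)~$\lim_{t\to 0^+}(|\Omega|-H_\Omega(t))/t=\Per_{\mathbf X}(\Omega)+\|\gamma_0\|V_{\gamma_0/\|\gamma_0\|}(\Omega)/2$ and (b)~$R_{ext}(t)=o(t)$.

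For (a) I would extend Theorem~\ref{thm:HK on rough sets} by the decomposition $X_t=-\gamma_0 t+Y_t$, where $Y$ is the driftless purely-discontinuous L\'evy process with L\'evy measure $\nu$, so that Theorem~\ref{thm:HK on rough sets} applies to $Y$ and gives $(|\Omega|-H^{Y}_\Omega(t))/t\to\Per_{\mathbf X}(\Omega)$. Setting $f_t(u):=\PP(u+Y_t\in\Omega)$, a change of variables yields
\begin{equation*}
H_\Omega(t)-H^{Y}_\Omega(t)=\int_{(\Omega-\gamma_0 t)\setminus\Omega}f_t(u)\,\ud u-\int_{\Omega\setminus(\Omega-\gamma_0 t)}f_t(u)\,\ud u.
\end{equation*}
Both symmetric-difference slabs have common Lebesgue measure $f_\Omega(\gamma_0 t)$, and for $\Omega$ of finite perimeter one has $f_\Omega(\gamma_0 t)/t\to \|\gamma_0\|V_{\gamma_0/\|\gamma_0\|}(\Omega)/2$. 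Moreover $f_t\to 1_{\overline{\Omega}}$ boundedly as $t\to 0$, so the first slab-integral is $o(t)$ while the second equals $\tfrac{1}{2}\|\gamma_0\|V_{\gamma_0/\|\gamma_0\|}(\Omega)\,t+o(t)$; the Pruitt bound \eqref{eq:Pruitt} controls the rate at which $f_t(u)$ approaches its limit in the relevant thin layer around $\partial\Omega$. Combining the two contributions gives (a).

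For (b) I would use the strong Markov property to write $R_{ext}(t)=\int_\Omega\PP_x(\tau_\Omega\leq t,\,X_{\tau_\Omega}\in(\overline{\Omega})^c,\,X_t\in\Omega)\,\ud x$ and split, for any $\epsilon>0$, according as $\mathrm{dist}(X_{\tau_\Omega},\Omega)>\epsilon$ or $\leq\epsilon$. On the first event, \eqref{eq:Pruitt} applied to the process started at $y=X_{\tau_\Omega}$ gives $\PP_y(X_{t-\tau_\Omega}\in\Omega)\leq C(t-\tau_\Omega)h(\epsilon)$, producing an $O(t^2 h(\epsilon))$ contribution. On the second event, the Ikeda--Watanabe formula bounds the probability that the exit jump lands in the thin layer $\{y\in(\overline{\Omega})^c:\mathrm{dist}(y,\Omega)\leq\epsilon\}$ by a quantity of the form $t\,\delta(\epsilon)$, where $\delta(\epsilon)\to 0$ as $\epsilon\to 0$ by dominated convergence together with $\Per_{\mathbf X}(\Omega)<\infty$. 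Sending $t\to 0^+$ first and $\epsilon\to 0$ afterwards shows $R_{ext}(t)/t\to 0$.

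Putting (a) and (b) together, one obtains $(|\Omega|-Q_\Omega(t))/t=(|\Omega|-H_\Omega(t))/t+R_{\partial}(t)/t+R_{ext}(t)/t$, and taking $\limsup$ or $\liminf$ yields the two stated identities. The main obstacle is (b): Pruitt's estimate degrades as the exit location nears $\partial\Omega$, so one needs refined control on the exit-jump distribution in thin layers around the boundary, which is the purpose of the Ikeda--Watanabe step together with $\Per_{\mathbf X}(\Omega)<\infty$. A secondary but nontrivial point is justifying in (a) that $f_t$ converges to $1_{\overline{\Omega}}$ quickly enough on both sides of $\partial\Omega$ to turn the heuristic slab identification into a rigorous limit; here finite perimeter of $\Omega$ is essential.
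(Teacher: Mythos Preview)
Your decomposition $|\Omega|-Q_\Omega(t)=(|\Omega|-H_\Omega(t))+R_\partial(t)+R_{ext}(t)$ is exactly the one the paper uses, and the reduction to (a) and (b) is correct.

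For (a), the paper does not rederive the heat-content asymptotic from Theorem~\ref{thm:HK on rough sets}; it simply invokes \cite[Theorem~3]{cg}, which already gives
\[
\lim_{t\to0}\frac{|\Omega|-H_\Omega(t)}{t}=\Per_{\mathbf X}(\Omega)+\tfrac12\|\gamma_0\|V_{\gamma_0/\|\gamma_0\|}(\Omega)
\]
for bounded-variation L\'evy processes and finite-perimeter $\Omega$. Your slab argument is a sketch of how one might reprove that theorem, and the ``secondary but nontrivial point'' you flag is genuine: showing that $\int_{\Omega\setminus(\Omega-\gamma_0 t)}(1-f_t)\,\ud u$ and $\int_{(\Omega-\gamma_0 t)\setminus\Omega}f_t\,\ud u$ are both $o(t)$ requires exactly the kind of directional-variation control that \cite[Theorem~3]{cg} already packages. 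Pruitt's bound alone does not give this, because points in the slab can be arbitrarily close to $\partial\Omega$. So this step is not wrong, but it is not a shortcut either; you are effectively reproving the cited result.

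For (b), your $\epsilon$-splitting works, but the paper's argument is shorter and avoids the two-parameter limit. Using Ikeda--Watanabe directly,
\[
R_{ext}(t)\le\int_0^t\!\int_\Omega g_{t-s}(u)\,\ud u\,\ud s,
\qquad
g_s(u)=\mathbf 1_\Omega(u)\int_{\overline\Omega^{\,c}-u}\PP_{z+u}(X_s\in\Omega)\,\nu(\ud z),
\]
and since $g_s(u)\le\nu(\Omega^c-u)$ with $\int_\Omega\nu(\Omega^c-u)\,\ud u=\Per_{\mathbf X}(\Omega)<\infty$, dominated convergence and right-continuity of $\mathbf X$ give $\int_\Omega g_s\,\ud u\to 0$ as $s\to0$. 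Hence $R_{ext}(t)/t\le t^{-1}\int_0^t\int_\Omega g_s\,\ud u\,\ud s\to 0$. This replaces your Pruitt-plus-thin-layer estimate by a single dominated-convergence step, using only $\Per_{\mathbf X}(\Omega)<\infty$.
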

\begin{proof}
Observe that
\begin{align*}
|\Omega|-Q_{\Omega}(t)&=\int_{\Omega}\left(1-\PP_x(X_t\in \Omega)\right)\ud x+\int_{\Omega}\EE_x\left(\tau_{\Omega}<t;\PP_{X_{\tau_{\Omega}}}\left(X_{t-\tau_{\Omega}}\in \Omega\right)\right)\ud x\\&=
(|\Omega|-H_{\Omega}(t))+\int_{\Omega}\EE_x\left(\tau_{\Omega}<t;\PP_{X_{\tau_{\Omega}}}\left(X_{t-\tau_{\Omega}}\in \Omega\right)\right)\ud x\\&=(|\Omega|-H_{\Omega}(t))+\mathrm{I}(t)+\mathrm{II}(t),
\end{align*}
where
\begin{align*}
\mathrm{I}(t)&=\int_{\Omega}\EE_x\left(\tau_{\Omega}<t, X_{\tau_{\Omega}}\in \overline{\Omega}^c;\PP_{X_{\tau_{\Omega}}}\left(X_{t-\tau_{\Omega}}\in \Omega\right)\right)\ud x,\\
\mathrm{II}(t)&=\int_{\Omega}\EE_x\left(\tau_{\Omega}<t, X_{\tau_{\Omega}}\in \partial\Omega;\PP_{X_{\tau_{\Omega}}}\left(X_{t-\tau_{\Omega}}\in \Omega\right)\right)\ud x.
\end{align*}
By \cite[Theorem 3]{cg}, it suffices to show that
$$
\lim_{t\to0} \frac{\mathrm{I}(t)}{t}=0.
$$
By the Ikeda-Watanabe formula \cite{MR0142153},  the joint distribution of $(\tau_\Omega,X_{\tau_{\Omega}})$ restricted to $X_{\tau_\Omega-}\neq X_{\tau_\Omega}$ is equal to
$$
\PP_x((\tau_\Omega,X_{\tau_{\Omega}})\in(\ud s, \ud z))=\int_{\Omega}
p^\Omega_{s}(x,\ud u)\nu(\ud z-u)\ud s,
$$
where $p^\Omega_{s}(x,\ud u)$ is the transition kernel of the process $\mathbf{X}$ killed upon exiting $\Omega$.
Hence
\begin{align*}
\mathrm{I}(t)&=\int_{\Omega}\ud x\int^t_0\ud s \int_{\overline{\Omega}^c}\PP_z(X_{t-s}\in \Omega)\int_{\Omega}p^\Omega_s(x,\ud u)\nu(\ud z-u)\\
&\leq \int^t_0\ud s \int_\Omega  P_s (g_{t-s})(x)\ud x ,
\end{align*}
where 
$$g_{s}(u)=\textbf{1}_{\Omega}(u)\int_{\overline{\Omega}^c-u}\PP_{z+u}(X_{s}\in \Omega)\nu(\ud z).
$$
Notice that
$$
\int_\Omega g_s(x)\ud x\leq \int_\Omega\int_{\Omega^c-x}\nu(\ud z)<\infty.
$$
Thus
\begin{align*}
\mathrm{I}(t)&\leq \int^t_0\ud s \int_\Omega \widehat{P}_s(\textbf{1}_\Omega)(x)g_{t-s}(x)\ud x\leq \int^t_0\ud s\int_\Omega g_s(x)\ud x.
\end{align*}
By the dominated convergence theorem and the right continuity of $\mathbf{X}$,
$$
\limsup_{t\to0}\frac{\mathrm{I}(t)}{t}\leq \limsup_{t\to0}\int_{\Omega}g_t(x)\ud x=\int_{\Omega}\limsup_{t\to0}g_t(x)\ud x=0.
$$
\end{proof}

As a consequence of Theorem \ref{thm:HK on rough sets}, one can prove  the following result.
Note that, unlike Theorem \ref{thm:main1}, we do not assume that 
$\Per (\Omega)<\infty$ in the result below.

\begin{theorem}\label{thm:main7}
Let $\mathbf{X}$ be a L\'{e}vy process of bounded variation with $\gamma_0=0$.
If $\Omega\subset \R^{d}$ is an open set of  finite Lebesgue measure,
then we have 
$$
\limsup_{t\rightarrow 0}\frac{|\Omega|-Q_{\Omega}(t)}{t}={\rm Per}_{\mathbf{X}}(\Omega)+\limsup_{t\rightarrow 0}\frac{1}{t}\int_{\Omega}\EE_{x}[\tau_{\Omega}<t, X_{\tau_{\Omega}}\in \partial \Omega, \PP_{X_{\tau_{\Omega}}}(X_{t-\tau_{\Omega}}\in \Omega)]\ud x
$$
and
$$
\liminf_{t\rightarrow 0}\frac{|\Omega|-Q_{\Omega}(t)}{t}={\rm Per}_{\mathbf{X}}(\Omega)+\liminf_{t\rightarrow 0}\frac{1}{t}\int_{\Omega}\EE_{x}[\tau_{\Omega}<t, X_{\tau_{\Omega}}\in \partial \Omega, \PP_{X_{\tau_{\Omega}}}(X_{t-\tau_{\Omega}}\in \Omega)]\ud x.
$$
In particular if $\PP^x(X_{\tau_\Omega}\in\partial\Omega)=0$ for almost every $x\in \Omega$ we have
$$\lim_{t\rightarrow 0}\frac{|\Omega|-Q_{\Omega}(t)}{t}={\rm Per}_{\mathbf{X}}(\Omega).$$
\end{theorem}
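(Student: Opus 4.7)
The plan is to mimic the proof of Theorem \ref{thm:main1} almost verbatim, substituting Theorem \ref{thm:HK on rough sets} in place of \cite[Theorem 3]{cg}. This substitution is exactly what lets us drop the finite-perimeter hypothesis on $\Omega$, at the price of the assumption $\gamma_0 = 0$; note that under $\gamma_0 = 0$ the extra drift-derivative term $\norm{\gamma_0}V_{\gamma_0/\norm{\gamma_0}}(\Omega)/2$ appearing in Theorem \ref{thm:main1} vanishes, which is why no analogous term shows up in the statement here.

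I would start from the same decomposition
$$|\Omega| - Q_\Omega(t) = (|\Omega| - H_\Omega(t)) + \mathrm{I}(t) + \mathrm{II}(t),$$
where $\mathrm{I}(t)$ collects the contribution from exits with $X_{\tau_\Omega}\in\overline{\Omega}^c$ and $\mathrm{II}(t)$ the contribution from $X_{\tau_\Omega}\in\partial\Omega$. The summand $\mathrm{II}(t)$ is exactly what appears on the right-hand side of the stated identities, so it suffices to control the first two terms. For the heat-content summand, Theorem \ref{thm:HK on rough sets} delivers
$$\lim_{t\to 0}\frac{|\Omega| - H_\Omega(t)}{t} = \Per_{\mathbf{X}}(\Omega),$$
with the value $+\infty$ permitted.

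The key remaining step is to show $\mathrm{I}(t)/t \to 0$ when $\Per_{\mathbf{X}}(\Omega) < \infty$. Here the Ikeda--Watanabe based estimate from the proof of Theorem \ref{thm:main1} transfers without any change, yielding
$$\mathrm{I}(t) \leq \int_0^t\!\!\int_\Omega g_s(x)\,\ud x\,\ud s, \qquad g_s(x) = 1_\Omega(x)\!\int_{\overline{\Omega}^c - x}\!\PP_{x+z}(X_s\in\Omega)\,\nu(\ud z),$$
with $\int_\Omega g_s\,\ud x \leq \Per_{\mathbf{X}}(\Omega) < \infty$. Since $x+z\in\overline{\Omega}^c$ (an open set) and $\mathbf{X}$ is right-continuous, $\PP_{x+z}(X_s\in\Omega) \to 0$ as $s\downarrow 0$, so dominated convergence gives $\int_\Omega g_s\,\ud x \to 0$ and hence $\mathrm{I}(t)/t \to 0$. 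Combined with the heat-content asymptotics, this establishes both identities in this case. In the complementary case $\Per_{\mathbf{X}}(\Omega) = \infty$, the right-hand side is $+\infty$ and the trivial inclusion $\{\tau_\Omega > t\}\subset\{X_t\in\Omega\}$ yields $Q_\Omega(t)\leq H_\Omega(t)$, so $\liminf_{t\to 0}(|\Omega|-Q_\Omega(t))/t \geq \lim_{t\to 0}(|\Omega|-H_\Omega(t))/t = \infty$ and both identities hold as $\infty = \infty$.

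Finally, under the assumption $\PP^x(X_{\tau_\Omega}\in\partial\Omega) = 0$ for a.e. $x\in\Omega$, the term $\mathrm{II}(t)$ vanishes identically and the stated limit collapses to $\Per_{\mathbf{X}}(\Omega)$. I do not foresee any substantive obstacle: the real difficulty has been absorbed into Theorem \ref{thm:HK on rough sets}, and what is left is the splitting argument of Theorem \ref{thm:main1}, reorganised only to accommodate the separate treatments of $\Per_{\mathbf{X}}(\Omega) < \infty$ and $\Per_{\mathbf{X}}(\Omega) = \infty$.
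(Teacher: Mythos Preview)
Your proposal is correct and follows essentially the same approach as the paper: split into the cases $\Per_{\mathbf{X}}(\Omega)<\infty$ and $\Per_{\mathbf{X}}(\Omega)=\infty$, handle the first by repeating the proof of Theorem~\ref{thm:main1} with Theorem~\ref{thm:HK on rough sets} in place of \cite[Theorem~3]{cg}, and handle the second via $Q_\Omega(t)\le H_\Omega(t)$ together with Theorem~\ref{thm:HK on rough sets}. The paper's proof is just a terser statement of exactly this.
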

\pf
If $\int_{\Omega}\nu(\Omega^c- y)\ud y<\infty$, the proof is the same as the proof of Theorem \ref{thm:main1} using Theorem \ref{thm:HK on rough sets} instead of \cite[Theorem 3]{cg}. 
The case of $\int_{\Omega}\nu(\Omega^c- y)\ud y=\infty$  is a consequence of Theorem  \ref{thm:HK on rough sets} and the fact that 
$H_\Omega(t)\geq Q_{\Omega}(t)$.

\qed

Combining the above result with \cite[Theorem 1]{Sztonyk}, we immediately get the 
following

\begin{corollary}\label{cor:IsoSHC}
Suppose that $\mathbf{X}$ is an isotropic L\'evy process of bounded variation and has an infinite L\'evy measure.
If $\Omega$ is a Lipschitz domain of finite Lebesgue measure, then 
$$
\lim_{t\to 0}\frac{|\Omega|-Q_{\Omega}(t)}{t} = \Per_{\mathbf{X}}(\Omega).
$$
\end{corollary}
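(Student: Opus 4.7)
The plan is to deduce the corollary directly from the \emph{in particular} clause of Theorem \ref{thm:main7}. For this one needs two ingredients: that the drift of $\mathbf{X}$ vanishes, so that Theorem \ref{thm:main7} applies at all, and that the exit distribution from $\Omega$ places no mass on $\partial \Omega$, so that the boundary term in that theorem drops out.

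The first ingredient is immediate from isotropy. Rotation invariance of the characteristic triplet forces $\gamma = 0$; it also makes $\nu$ rotationally symmetric, whence $\int_{\norm{y}\leq 1} y\,\nu(\ud y) = 0$ as the integral of an odd function against a symmetric measure. Hence $\gamma_0 = \int_{\norm{y}\leq 1} y\,\nu(\ud y) - \gamma = 0$, and Theorem \ref{thm:main7} applies to $\mathbf{X}$ and to any open set of finite Lebesgue measure, in particular to the Lipschitz domain $\Omega$.

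The second ingredient is exactly \cite[Theorem 1]{Sztonyk}: for an isotropic pure-jump L\'evy process with infinite L\'evy measure, the exit distribution from a Lipschitz domain does not charge the boundary, so $\PP_x(X_{\tau_\Omega}\in\partial\Omega)=0$ for every (hence a.e.) $x\in\Omega$. Substituting this into Theorem \ref{thm:main7} makes the boundary contribution vanish and produces the claimed limit $\Per_{\mathbf{X}}(\Omega)$.

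The only substantive obstacle is the verification that the exit distribution puts no mass on $\partial\Omega$; this is precisely why both the Lipschitz regularity and the infinite L\'evy measure hypotheses are needed. The Lipschitz condition supplies a uniform exterior cone so that jumps have room to land strictly inside $\overline{\Omega}^c$, while the infinite L\'evy measure guarantees enough jump activity to smear out the exit distribution off any codimension-one set. Once \cite[Theorem 1]{Sztonyk} is cited, the corollary follows as an immediate specialization of Theorem \ref{thm:main7}.
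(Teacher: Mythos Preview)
Your proof is correct and follows exactly the paper's approach: the paper simply states that the corollary follows by combining Theorem \ref{thm:main7} with \cite[Theorem 1]{Sztonyk}, and you have spelled out the two pieces needed for that combination (isotropy forces $\gamma_0=0$, and Sztonyk's result kills the boundary term). If anything, your version is more explicit than the paper's one-line justification.
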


Combining Theorem \ref{thm:main7} with \cite{Bret}, we get the following
\begin{corollary}\label{cor:HK 1d}
Let $d=1$ and $\Omega$ be open. 
Assume that $\mathbf{X}$ has an infinite L\'{e}vy measure, is of bounded variation and 
$\gamma_0=0$, then
\begin{align*}
\lim_{t\to 0}\frac{|\Omega|-Q_{\Omega}(t)}{t} &= \Per_{\mathbf{X}}(\Omega).
\end{align*}
\end{corollary}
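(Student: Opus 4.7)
The plan is to reduce to the ``in particular'' clause of Theorem \ref{thm:main7}: once we verify that $\PP^x(X_{\tau_\Omega}\in\partial\Omega)=0$ for almost every $x\in\Omega$, the boundary term in Theorem \ref{thm:main7} vanishes in both the $\limsup$ and the $\liminf$, and the two extremes collapse to $\Per_{\mathbf{X}}(\Omega)$.

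The first observation is a purely one-dimensional topological fact: every open $\Omega\subset\R$ decomposes as a countable disjoint union of open intervals, so $\partial\Omega$ is at most countable. The second observation is the classical criterion of Bretagnolle \cite{Bret}, which characterizes when singletons are polar for a one-dimensional L\'evy process. Under the present hypotheses — dimension one, bounded variation, $\gamma_0=0$, and an infinite L\'evy measure — Bretagnolle's theorem asserts that
$$\PP^x(\exists\, t>0:\,X_t=y)=0 \quad \text{for all } x,y\in\R.$$
Combining the two observations via countable subadditivity yields
$$\PP^x\bigl(\exists\, t>0:\, X_t\in\partial\Omega\bigr)\le \sum_{y\in\partial\Omega}\PP^x\bigl(\exists\, t>0:\, X_t=y\bigr)=0,$$
and in particular $\PP^x(X_{\tau_\Omega}\in\partial\Omega)=0$ for every $x\in\Omega$. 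An appeal to the ``in particular'' clause of Theorem \ref{thm:main7} then finishes the argument.

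The only substantive ingredient beyond Theorem \ref{thm:main7} is the Bretagnolle criterion, and the only point requiring any care is identifying why the hypotheses of the corollary are precisely those that force polarity of singletons: the condition $\gamma_0=0$ rules out the drift mechanism that would cause the process to sweep continuously through points, while the infiniteness of the L\'evy measure rules out the compound Poisson case (whose range is a countable random set that can contain any prescribed point with positive probability). Once these two structural facts about one-dimensional bounded-variation L\'evy processes are in hand, no further analysis of the characteristic exponent or of the fine behavior of $X_{\tau_\Omega}$ is needed.
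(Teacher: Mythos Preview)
Your proof is correct and follows essentially the same route as the paper's: invoke \cite[Th\'eor\`eme 8]{Bret} to get polarity of singletons under the hypotheses $d=1$, bounded variation, $\gamma_0=0$, infinite L\'evy measure, deduce that the countable set $\partial\Omega$ is polar, and then apply the ``in particular'' clause of Theorem~\ref{thm:main7}. The paper's proof is a one-line version of the same argument; your write-up simply makes the countability of $\partial\Omega$ and the subadditivity step explicit, and adds helpful commentary on why each hypothesis is needed for the Bretagnolle criterion.
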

\pf
We use here that $\{0\}$ is polar (\cite[Theoreme 8]{Bret}) and therefore $\partial\Omega$ is polar.
\qed

If there exists a nonzero drift $\gamma_{0}$, then the asymptotic behavior of
the  heat content and the spectral heat content can be different. We illustrate 
this by the simple example below. 

\begin{example}
Let $\gamma\in\mathbb{R}\setminus\{0\}$. We consider $\Omega=(-1,0)\cup(0,1)$ and  a deterministic process $X_t=\gamma t$. Then $H(t)=(|\gamma|t)\wedge 2$ and $|\Omega|-Q_\Omega(t)=(2|\gamma|t)\wedge 2$. That is 
$$ 2|\gamma|=\lim_{t\to 0}\frac{|\Omega|-Q_\Omega(t)}{t}\neq \lim_{t\to 0}\frac{H(t)}{t}=|\gamma|.$$
\end{example}

\section{Processes of unbounded variation in $\R$}\label{section:unbounded variation}
In this section we study the asymptotic behavior of the spectral heat content for symmetric L\'evy processes on the real line. 
We consider two different cases separately. 
In the first case, we assume that the characteristic exponent $\psi(\xi)$ of $\mathbf{X}$ is regularly varying of index $\alpha$ at infinity for some $\alpha\in (1,2]$.
In the second case, we assume that $\mathbf{X}$ is a symmetric L\'evy process whose the characteristic exponent is $\psi(\xi)=|\xi|$, 
that is,  $\mathbf{X}$ is a Cauchy process. 

For any $\epsilon>0$, let $\Omega_\epsilon:=\{x\in \Omega: \mathrm{dist}(\{x\},\partial\Omega)<\epsilon\}$.
\begin{lemma}\label{lemma:inside}
For any $\eps>0$, we have
$$
\int_{\Omega\setminus \Omega_{\eps}}
\PP_{x}(\tau_{\Omega}\leq t)\ud x \leq C_1\,
|\Omega\setminus \Omega_{\eps}|th(\eps).
$$ 
\end{lemma}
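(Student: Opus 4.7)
The proof is essentially a one-line application of the Pruitt estimate \eqref{eq:Pruitt}, after a geometric observation. The plan is to argue that a point $x$ sitting at distance at least $\eps$ from $\partial\Omega$ has a full open ball $B(x,\eps)\subset\Omega$ around it, so that the process cannot exit $\Omega$ by time $t$ without first traveling a distance of at least $\eps$ from its starting point.

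More precisely, I would first note that for any $x\in\Omega\setminus\Omega_{\eps}$ one has $\mathrm{dist}(x,\partial\Omega)\geq\eps$. Since $B(x,\eps)$ is an open, connected subset of $\R^{d}$ disjoint from $\partial\Omega$ and containing the point $x\in\Omega$, the whole open ball $B(x,\eps)$ lies inside $\Omega$. Consequently
\[
\{\tau_{\Omega}\leq t\}\subset\bigl\{\sup_{s\leq t}|X_{s}-x|\geq \eps\bigr\},
\]
because if $\tau_{\Omega}\leq t$ then $X_{\tau_{\Omega}}\in\Omega^{c}\subset B(x,\eps)^{c}$, forcing the supremum of $|X_{s}-x|$ on $[0,t]$ to be at least $\eps$.

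Next I would use the spatial homogeneity of the L\'{e}vy process: under $\PP_{x}$, the process $(X_{s}-x)_{s\geq 0}$ has the same law as $(X_{s})_{s\geq 0}$ under $\PP_{0}$, so that
\[
\PP_{x}\bigl(\sup_{s\leq t}|X_{s}-x|\geq \eps\bigr)=\PP\bigl(\sup_{s\leq t}|X_{s}|\geq \eps\bigr).
\]
Applying the Pruitt bound \eqref{eq:Pruitt} (approaching $\eps$ from below by $\eps'\uparrow\eps$ and using monotonicity of the events $\{\sup_{s\leq t}|X_{s}|>\eps'\}$ to pass from strict to non-strict inequality if needed) gives $\PP_{x}(\tau_{\Omega}\leq t)\leq C_{1}th(\eps)$ for every $x\in\Omega\setminus\Omega_{\eps}$. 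Finally, integrating this pointwise bound over $\Omega\setminus\Omega_{\eps}$ yields the claimed inequality.

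The only mildly delicate point is verifying $B(x,\eps)\subset\Omega$ from $\mathrm{dist}(x,\partial\Omega)\geq\eps$, which follows because any $y\in B(x,\eps)\setminus\Omega$ would give a line segment from $x\in\Omega$ to $y\notin\Omega$ of length less than $\eps$ crossing $\partial\Omega$, contradicting the distance assumption. No serious obstacle is expected.
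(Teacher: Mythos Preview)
Your proof is correct and follows essentially the same approach as the paper: bound $\PP_{x}(\tau_{\Omega}\leq t)$ by $\PP(\sup_{s\leq t}|X_{s}|\geq \eps)$ using the inclusion $B(x,\eps)\subset\Omega$, apply the Pruitt estimate \eqref{eq:Pruitt}, and integrate. If anything, your version is more carefully written than the paper's, which compresses the argument to a single line.
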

\pf
It follows from \eqref{eq:Pruitt} that
$$
\int_{\Omega\setminus \Omega_{\eps}}\PP_{x}(\tau_{\Omega}\leq t)\ud x
\leq\int_{\Omega\setminus \Omega_{\eps}}\PP(\sup_{s\leq t}X_{s}\geq \eps)\ud x
\leq C_1\,|\Omega\setminus \Omega_{\eps}|th(\eps). 
$$
\qed

Every open set $\Omega$ in $\R$ can be written as the union of countably many
disjoint open intervals: $\Omega=\cup_{i}(a_{i},b_{i})$. 
Let $\Omega=\cup_{i}(a_{i},b_{i})$ and $\partial^{ad}\Omega:=\{b_{j}: 
\text{ there exists } i\neq j \text{ such that } b_{j}=a_{i}\}$ 
be the subset of $\partial \Omega$ 
which consists of  common boundary points of adjacent
components of $\Omega$.
Let
\beq\label{eqn:augmented set}
\tilde{\Omega}:=\Omega\cup \partial^{ad}\Omega
\eeq
be the augmented set of $\Omega$.
Note that the distance between any two
distinct components of $\tilde{\Omega}$ is always strictly positive.

Recall that
$$
f_{\Omega}(y)=|\Omega|-|\Omega\cap (\Omega+y)|=\int_{\R}1_{\Omega}(x)\ud x-\int_{\R}1_{\Omega}(x)1_{\Omega}(x-y)\ud x=\int_{\R}1_{\Omega}(x)1_{\Omega^{c}}(x-y)\ud x.
$$
For any L\'evy process $\mathbf{X}$ and $t\ge 0$, we define 
$\overline{X}_t:=\sup_{s\in [0, t]}X_s$ and $\underline{X}_t:=\inf_{s\in [0, t]}X_s$.

Let $\psi^{*}(u)=\sup_{\xi\in[0,u]}\psi(\xi)$, $u\geq 0$, and let 
$\psi^{-1}(u)=\inf\{s\geq 0 : \psi^{*}(s)\geq u\}$ be the generalized inverse of $\psi^{*}$.

\subsection{$\psi\in \mathcal{R}_{\alpha}$, $\alpha\in (1,2]$}
In this subsection we study 
the asymptotic behavior of the spectral heat content of general open sets of finite Lebesgue measure with respect to 
symmetric L\'evy processes in $\R$.
We assume that $\mathbf{X}$ is a symmetric L\'evy process with the characteristic
exponent $\psi$ and that there exists $\alpha\in (1,2]$ such  that $\psi\in \mathcal{R}_{\alpha}$.

For $y\in \R$, let $T_{y}=\inf\{t>0 : X_{t}=y\}$ be the first time  the process $\mathbf{X}$ hits $y$ and $T^{(\alpha)}_{y}$ the first time the symmetric
$\alpha$-stable process $\mathbf{S}^{(\alpha)}$ 
(with characteristic exponent $|\xi|^\alpha$) hits $y$.

Here is the main result of this section:

\begin{theorem}\label{thm:general open sets}
Suppose that $\mathbf{X}$ is a symmetric L\'evy process with the characteristic
exponent $\psi\in\mathcal{R}_\alpha$ for some $\alpha\in(1,2]$.
Let $\Omega$ be an open set in $\R$ with $|\Omega|<\infty$.
Let $A$ be the number of components of $\tilde{\Omega}$ and $B$ be number 
of points in $\partial^{ad}\Omega$.
Then we have
\beq\label{eqn:main}
\lim_{t\rightarrow 0}\psi^{-1}(1/t)\left(|\Omega|-Q_{\Omega}(t)\right)=
2A\EE[\overline{S^{(\alpha)}}_{1}]
+2BC_{1},
\eeq
where $C_{1}=\int_{0}^{\infty}\PP(T_{u}^{(\alpha)}\leq 1)\ud u\leq \EE[\overline{S^{(\alpha)}}_{1}]<\infty$.
\end{theorem}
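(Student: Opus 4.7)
The plan is to write $|\Omega| - Q_\Omega(t) = \int_\Omega \PP_x(\tau_\Omega \leq t)\,\ud x$ and to exploit the decomposition
\[
|\Omega| - Q_\Omega(t) \;=\; \bigl(|\tilde{\Omega}| - Q_{\tilde{\Omega}}(t)\bigr) \;+\; \bigl(Q_{\tilde{\Omega}}(t) - Q_\Omega(t)\bigr),
\]
using $|\tilde{\Omega}| = |\Omega|$ since $\partial^{ad}\Omega$ is countable. I expect the first summand to yield the $2A\,\EE[\overline{S^{(\alpha)}}_1]$ term, generated by the $2A$ outer endpoints of the $A$ components of $\tilde{\Omega}$, which are separated by a positive gap. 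The second summand should yield $2BC_{1}$, coming from hitting each of the $B$ points of $\partial^{ad}\Omega$ from both sides.

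For the first summand I would work one component at a time. Because components of $\tilde{\Omega}$ are separated by some $\delta > 0$, any jump that carries the process between distinct components has size at least $\delta$, and hence occurs with probability at most $t\,\nu(\{|y|>\delta\})$; after multiplication by $\psi^{-1}(1/t)$ this is negligible, since $\psi^{-1}\in\calR_{1/\alpha}$ and $\alpha>1$ imply $\psi^{-1}(1/t)\cdot t\to 0$. Lemma \ref{lemma:inside} controls the deep-interior contribution by a term of order $\psi^{-1}(1/t)\cdot t\,h(\eps)\to 0$. Near the right endpoint $d_k$ of a component, $\PP_x(\tau_{\tilde{\Omega}}\leq t)$ therefore agrees with $\PP(\overline{X}_t \geq d_k - x)$ up to negligible error, and the change of variable $u = v/\psi^{-1}(1/t)$ yields
\[
\psi^{-1}(1/t)\int_0^{\eps}\PP\bigl(\overline{X}_t \geq u\bigr)\,\ud u \;=\; \int_0^{\eps\psi^{-1}(1/t)}\PP\bigl(\psi^{-1}(1/t)\,\overline{X}_t \geq v\bigr)\,\ud v.
\]
The small-time functional limit $\bigl(\psi^{-1}(1/t)X_{ts}\bigr)_{s\in[0,1]}\Rightarrow (S^{(\alpha)}_s)_{s\in[0,1]}$, combined with continuity of the sup functional, gives $\psi^{-1}(1/t)\,\overline{X}_t\Rightarrow \overline{S^{(\alpha)}}_1$, and dominated convergence with a uniform Pruitt-type tail bound produces $\EE[\overline{S^{(\alpha)}}_1]$ per endpoint, hence $2A\,\EE[\overline{S^{(\alpha)}}_1]$ in total.

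For the second summand,
\[
Q_{\tilde{\Omega}}(t)-Q_\Omega(t)=\int_\Omega\PP_x(\tau_\Omega\leq t,\,\tau_{\tilde{\Omega}}>t)\,\ud x,
\]
and the integrand captures events where the process exits $\Omega$ but not $\tilde{\Omega}$ by time $t$. Since $\tilde{\Omega}\setminus\Omega=\partial^{ad}\Omega$ is finite, the process must hit some $b\in\partial^{ad}\Omega$ before time $t$ and then stay in $\tilde{\Omega}$. For $\alpha\in(1,2]$ single points are non-polar, so $\PP_x(T_b\leq t)=\PP(T_{|x-b|}\leq t)$ gives the dominant contribution. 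The same Skorokhod convergence and continuous mapping applied to the hitting-time functional produce
\[
\PP\bigl(T_{v/\psi^{-1}(1/t)}\leq t\bigr)\longrightarrow\PP\bigl(T^{(\alpha)}_v\leq 1\bigr)
\]
for a.e.\ $v>0$, and the same rescaling with dominated convergence (using $\PP(T_u\leq t)\leq \PP(\overline{X}_t\geq u)$) yields $2C_{1}$ per point $b$. The factor $2$ comes from the two sides of $b$, and the "staying in $\tilde{\Omega}$" condition contributes only a lower-order correction via Lemma \ref{lemma:inside}, since $b$ sits in the interior of a component of $\tilde{\Omega}$. Summing over $b$ gives $2BC_{1}$.

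The main obstacles I expect are: (i) establishing the small-time functional convergence with enough uniform tail control to interchange limits with the improper integrals, and verifying that the sup- and hitting-time maps are continuity points for the stable limit at the relevant values; (ii) rigorously controlling jumps from $\Omega$ that cross a boundary but land back in $\Omega$, or teleport across a component of $\tilde{\Omega}$, which will again hinge on $\psi^{-1}(1/t)\cdot t\to 0$; and (iii) the stated bound $C_{1}\leq \EE[\overline{S^{(\alpha)}}_1]<\infty$, which follows from $\{T^{(\alpha)}_u\leq 1\}\subset\{\overline{S^{(\alpha)}}_1\geq u\}$ and the finiteness of the first moment of the stable supremum for $\alpha>1$.
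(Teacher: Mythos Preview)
Your approach is correct and rests on the same analytical ingredients as the paper's---the small-time functional limit to the stable process, the Pruitt maximal inequality \eqref{eq:Pruitt}, and the local asymptotics $\psi^{-1}(1/t)\int_0^\eps\PP(\overline X_t\ge u)\,\ud u\to\EE[\overline{S^{(\alpha)}}_1]$ and $\psi^{-1}(1/t)\int_0^\eps\PP(T_u\le t)\,\ud u\to C_1$---but the paper organizes the argument differently. It does \emph{not} pass to $\tilde\Omega$; instead it writes $|\Omega|-Q_\Omega(t)=\int_{\Omega\setminus\Omega_\eps}+\sum_i\int_{a_i}^{a_i+\eps}+\sum_i\int_{b_i-\eps}^{b_i}$, kills the first piece with Lemma \ref{lemma:inside}, and then proves two local lemmas (Lemmas \ref{lemma:positive distance} and \ref{lemma:zero distance}) that compute the near-endpoint integrals directly, according to whether the endpoint belongs to $\partial^{ad}\Omega$ or not. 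Your decomposition $(|\tilde\Omega|-Q_{\tilde\Omega}(t))+(Q_{\tilde\Omega}(t)-Q_\Omega(t))$ separates the two contributions globally rather than endpoint-by-endpoint, which is conceptually clean but ends up requiring the same local computations; in particular, your second summand still needs the hitting-time asymptotics of Lemma \ref{lemma:hitting a point}, together with a bound on the ``staying in $\tilde\Omega$'' correction (for which you should invoke \eqref{eq:Pruitt} at the adjacent point $b$, which is at positive distance from $\partial\tilde\Omega$, rather than Lemma \ref{lemma:inside}). The paper's route is slightly more direct here since it never has to peel off that correction.

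One point you must add: the case where $\Omega$ has infinitely many components. Your sketch tacitly assumes $\partial^{ad}\Omega$ is finite and that the components of $\tilde\Omega$ are separated by a uniform gap $\delta>0$, neither of which need hold. The paper treats this case separately (Lemma \ref{lemma:many components1}): if $A=\infty$ or $B=\infty$ one takes any $N$ well-separated endpoints, applies the local lemma to each, and lets $N\to\infty$ to force the $\liminf$ to $+\infty$.
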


In the case of 
isotropic $\alpha$-stable processes, we have
$\psi^{-1}(1/t)=t^{-1/\alpha}$.

\begin{remark}
Note that $A$ can be finite even if the number of components in $\Omega$ is infinite. 
Also if $\Omega$ has infinitely many components, either $A$ or $B$ must 
be infinite and therefore we have 
$\displaystyle\lim_{t\rightarrow 0}\psi^{-1}(1/t)\left(|\Omega|-Q_{\Omega}(t)\right)=\infty$.

Under the assumptions of this subsection, the process $\mathbf{X}$ has a transition
density, and thus $H_{\Omega}(t)=H_{\tilde{\Omega}}(t)$.
Hence it follows from \cite[Theorem 2]{cg} that when $\Omega\subset \R$ has infinitely many components but $\tilde{\Omega}$ has only finitely many components, we have
$$
\lim_{t\to 0}\psi^{-1}(1/t)(|\Omega|-H_\Omega(t))
=\frac{\Gamma(1-\frac{1}{\alpha})}{\pi}\rm{Per}(\Omega)=\frac{2\Gamma(1-\frac{1}{\alpha})}{\pi}A<\infty.
$$ 
\end{remark}

\begin{remark}
In the case of Brownian motion,
we have
$$
C_{1}=\int_{0}^{\infty}\PP(T_{u}^{\mathbf{B}}\leq 1)\ud u=\int_{0}^{\infty}\PP(\overline{B}_{1}\geq u)\ud u=\EE[\overline{B}_{1}].
$$
Hence in this case \eqref{eqn:main} becomes 
$$
\lim_{t\rightarrow 0}\frac{|\Omega|-Q_{\Omega}^{\mathbf{B}}(t)}{\sqrt{t}}
=2\times (\text{the number of components in } \Omega)\times \frac{2}{\sqrt{\pi}}.
$$
\end{remark}

We now give some preliminary results to prepare for the proof of Theorem \ref{thm:general open sets}.
It is easy to check that in this case $\mathbf{X}$ is of unbounded variation.
Since $\mathbf{X}$ is symmetric, it follows from \cite[Corollary 1]{Grzywny1} that
\beq\label{eqn:comparable1}
\frac12 \psi^{*}(r^{-1})\leq h(r)\leq 
24\psi^{*}(r^{-1}).
\eeq
Also it follows from \cite[Theorem 1.5.3]{bgt} that, for each $R>0$, there exists a constant $c=c(R)>0$ such that
\beq\label{eqn:comparable2}
\psi^{*}(r)\leq c\psi(r), \quad \text{for } r\geq  R.
\eeq

Since $\psi\in\mathcal{R}_\alpha$ for some $\alpha\in (1,2]$ and $\mathbf{X}$ is symmetric, we have
$$
\int_{\R}\frac{1}{1+\psi(\xi)}\ud\xi<\infty.
$$
It follows from \cite[Theoreme 8]{Bret} 
that $\{y\in \R : \PP(T_{y}<\infty)>0\}=\R$.
\begin{lemma}\label{lemma:hitting a point}
Suppose that $\mathbf{X}$ is a symmetric L\'evy process with the characteristic
exponent $\psi\in\mathcal{R}_\alpha$ for some $\alpha\in(1,2]$.
There exists $\eps_1>0$ such that for any  $\eps\leq \eps_1$,
$$
\lim_{t\rightarrow 0}\psi^{-1}(1/t) \int_{0}^{\eps}\PP(T_{y}\leq t)\ud y
=\int_{0}^{\infty}\PP(T_{u}^{(\alpha)}\leq 1)\ud u.
$$
\end{lemma}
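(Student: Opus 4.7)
The plan is to rescale the process $\mathbf{X}$ so that it converges to $\mathbf{S}^{(\alpha)}$ and then pass to the limit inside the integral via dominated convergence. Set $a_t:=\psi^{-1}(1/t)$, which tends to infinity as $t\downarrow 0$. The substitution $u=a_t y$ rewrites the integral as
$$\psi^{-1}(1/t)\int_0^{\eps}\PP(T_y\leq t)\,\ud y \;=\; \int_0^{\eps a_t}\PP(T_{u/a_t}\leq t)\,\ud u.$$
For each $t>0$ define the rescaled L\'evy process $Y^t_\sigma := a_t X_{t\sigma}$; its characteristic exponent is $\psi_t(\xi)=t\,\psi(a_t\xi)$, which converges pointwise to $|\xi|^\alpha$ by regular variation of $\psi$ together with \eqref{eqn:comparable1}--\eqref{eqn:comparable2}. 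Hence $Y^t$ converges in Skorokhod distribution to $\mathbf{S}^{(\alpha)}$, and the scaling identity $\PP(T_{u/a_t}(\mathbf{X})\leq t)=\PP(T_u(Y^t)\leq 1)$ reduces the task to
$$\lim_{t\to 0}\int_0^{\eps a_t}\PP(T_u(Y^t)\leq 1)\,\ud u \;=\; \int_0^\infty \PP(T_u^{(\alpha)}\leq 1)\,\ud u.$$

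For the pointwise limit, since $\alpha>1$ the set $\{u\}$ is non-polar for $\mathbf{S}^{(\alpha)}$ (by \cite[Theoreme 8]{Bret}) and the distribution function of $T_u^{(\alpha)}$ is continuous; together with $Y^t\Rightarrow \mathbf{S}^{(\alpha)}$, the continuous mapping theorem applied to the hitting-time functional then yields $\PP(T_u(Y^t)\leq 1)\to \PP(T_u^{(\alpha)}\leq 1)$ for every $u>0$. For a $t$-uniform dominating function I apply Pruitt's estimate \eqref{eq:Pruitt} to $Y^t$, which together with \eqref{eqn:comparable1} gives
$$\PP(T_u(Y^t)\leq 1) \;\leq\; \PP\bigl(\sup\nolimits_{\sigma\leq 1}|Y^t_\sigma|\geq u\bigr) \;\leq\; C_1\,h_{Y^t}(u) \;\leq\; 24\,C_1\,t\,\psi^*(a_t/u).$$
Choose $\eps_1$ so small that $1/\eps_1$ exceeds the Potter threshold for $\psi^*$; then for $\eps\leq \eps_1$ we have $a_t/u\geq 1/\eps_1$ throughout $(0,\eps a_t)$, and Potter's bounds (see \cite[Theorem 1.5.6]{bgt}) with any fixed $0<\delta<\alpha-1$ give $t\,\psi^*(a_t/u)\leq K u^{-(\alpha-\delta)}$ for $u\geq M$ and all sufficiently small $t$. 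On $(0,M]$ the trivial bound $\PP(\cdot)\leq 1$ suffices, producing the dominating function $\mathbf{1}_{(0,M)}(u)+K u^{-(\alpha-\delta)}\mathbf{1}_{[M,\infty)}(u)$, which is integrable on $(0,\infty)$ since $\alpha-\delta>1$.

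The hard part will be justifying the pointwise convergence of hitting-time probabilities. Hitting a point is in general only a semi-continuous functional on the Skorokhod space, so upgrading $Y^t\Rightarrow \mathbf{S}^{(\alpha)}$ to convergence of $\PP(T_u(Y^t)\leq 1)$ requires both the non-polarity of $\{u\}$ for $\mathbf{S}^{(\alpha)}$ and the absence of an atom of the law of $T_u^{(\alpha)}$ at time $1$. Once these regularity facts are in hand, the scaling, Pruitt, and Potter estimates assembled above combine via dominated convergence to yield the stated limit.
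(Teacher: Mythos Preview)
Your overall architecture---rescale via $Y^{(t)}_s=\psi^{-1}(1/t)X_{ts}$, change variables, prove pointwise convergence of $\PP(T_u(Y^{(t)})\leq 1)$, dominate uniformly in $t$ via Pruitt's inequality and Potter bounds, and conclude by splitting at a large $M$---is exactly the paper's. Your domination argument is essentially identical to theirs.

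The gap is in the pointwise step. You invoke the continuous mapping theorem for the functional $\omega\mapsto T_u(\omega)$, but this functional is not continuous on Skorokhod space, and the two regularity facts you list (non-polarity of $\{u\}$ for $\mathbf{S}^{(\alpha)}$ and continuity of the law of $T_u^{(\alpha)}$) are \emph{not} enough to make its discontinuity set $\PP_{\mathbf{S}^{(\alpha)}}$-null. The obstruction is concrete: a c\`adl\`ag path can jump over the level $u$ without touching it, so paths arbitrarily close (in Skorokhod distance) to a path that hits $u$ at time $\tau$ may have $T_u=\infty$. For the stable limit this event---crossing $u$ by a jump rather than continuously---has positive probability, so $T_u$ fails to be a.s.\ continuous at the limiting law, and the continuous mapping theorem does not apply. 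Absence of an atom of $T_u^{(\alpha)}$ at $1$ only handles the boundary of $\{T_u\leq 1\}$ in the \emph{time} variable; it does nothing for discontinuities in the \emph{path} variable.

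The paper sidesteps path-space continuity entirely by using \cite[Theorem II.19(iii)]{Ber}, which expresses the Laplace transform $\EE[e^{-qT_u}]$ as a ratio of $q$-resolvent densities, themselves explicit integrals of $(q+\psi)^{-1}$. Since $\psi^{(t)}(\xi)\to|\xi|^\alpha$ pointwise, the dominated convergence theorem (with the Potter bound \eqref{eqn:LDCT1} supplying the majorant) gives convergence of these integrals, hence of the Laplace transforms, hence $T_u(Y^{(t)})\Rightarrow T_u^{(\alpha)}$, and finally $\PP(T_u(Y^{(t)})\leq 1)\to\PP(T_u^{(\alpha)}\leq 1)$ because the limiting distribution is continuous. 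To repair your proof you should replace the continuous-mapping argument with this analytic route; the rest of your write-up then goes through unchanged.
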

\pf Define a process $\mathbf{Y}^{(t)}=(Y^{(t)}_{s})_{s\ge 0}$ by 
$Y^{(t)}_{s}=\psi^{-1}(1/t)X_{ts}$. 
The characteristic exponent of $\mathbf{Y}^{(t)}$ is
$\psi^{(t)}(\xi)=t\psi(\psi^{-1}(1/t)\xi)$.
Note that
$$
\lim_{t\to 0}\psi^{(t)}(\xi)=\lim_{t\to 0}\frac{\psi(\psi^{-1}(1/t)\xi)}{\psi(\psi^{-1}(1/t))}=|\xi|^\alpha.
$$
Observe that by the change of variables $u=\psi^{-1}(1/t)y$,
\begin{align*}
\psi^{-1}(1/t)\int_{0}^{\eps}\PP(T_{y}\leq t)\ud y
=\psi^{-1}(1/t)\int_{0}^{\eps}\PP(T_{\psi^{-1}(1/t)y}^{\mathbf{Y}^{(t)}}\leq 1)\ud y
=\int_{0}^{\eps\psi^{-1}(1/t)}\PP(T_{u}^{\mathbf{Y}^{(t)}}\leq 1)\ud u.
\end{align*}
Fix $0<\delta<\alpha-1$.
Since $\psi\in \mathcal{R}_{\alpha}$, it follows from \cite[Theorem 1.5.6]{bgt} that there exists $x_0>0$ such that
\beq\label{eqn:LDCT1}
\frac{\psi(\psi^{-1}(1/t)1/u)}{\psi(\psi^{-1}(1/t))}\leq \frac{2}{u^{\alpha-\delta}}
\eeq
for all $\psi^{-1}(1/t)\geq x_{0}$ and $1\leq u\leq \frac{\psi^{-1}(1/t)}{x_{0}}$.
Hence by \cite[Theorem II.19.(iii)]{Ber} and the dominated convergence theorem,
\beq\label{eqn:hitting time conv}
\lim_{t\rightarrow 0}\PP(T_{u}^{\mathbf{Y}^{(t)}}\leq 1)=\PP(T_{u}^{(\alpha)}\leq 1).
\eeq

Let $\eps_1:=\frac{1}{x_{0}}$.
It follows from \eqref{eqn:comparable2} 
that there exists $c=c(x_{0})>0$ such that
\beq\label{eqn:LDCT2}
\psi^{*}(r)\leq c(x_{0})\psi(r), \quad r\geq x_{0}.
\eeq
Take $M>0$.
It follows from 
the dominated convergence theorem and \eqref{eqn:hitting time conv} that
\beq\label{eqn:up to M}
\lim_{t\rightarrow 0}\int_{0}^{M}\PP\left(T_{u}^{\mathbf{Y}^{(t)}}\leq 1\right)\ud u=\int_{0}^{M}\PP(T_{u}^{(\alpha)}\leq 1)\ud u.
\eeq
Suppose $\eps\leq \eps_1$ and $M\leq u \leq \eps \psi^{-1}(1/t)$. Then from \eqref{eq:Pruitt}, \eqref{eqn:comparable1}, \eqref{eqn:LDCT1} and \eqref{eqn:LDCT2} we get that
\begin{align*}
&\PP\left((T_{u}^{\mathbf{Y}^{(t)}}\leq 1\right)\leq \PP\left(\overline{Y^{(t)}}_{1}\geq u\right)\leq C_1 h^{\mathbf{Y}^{(t)}}(u)
\leq  24C_1t\psi^{*}(\psi^{-1}(1/t)1/u)\\ &\leq 24C_1c(x_0)t\psi(\psi^{-1}(1/t)1/u) =24C_1c(x_0)\frac{\psi(\psi^{-1}(1/t)1/u)}{\psi(\psi^{-1}(1/t))}\leq \frac{48C_1c(x_0)}{u^{\alpha-\delta}}.
\end{align*}
Hence we have
\beq\label{eqn:after M}
\int_{M}^{\eps \psi^{-1}(1/t)}\PP(T_{u}^{\mathbf{Y}^{(t)}}\leq 1)\ud u \leq \int_{M}^{\eps \psi^{-1}(1/t)}\frac{c(d,x_0)}{u^{\alpha-\delta}}\ud u\leq \int_{M}^{\infty}\frac{c(d,x_0)}{u^{\alpha-\delta}}\ud u=\frac{c(d,x_0)}{\alpha-\delta-1}M^{-\alpha+\delta+1}
\eeq
for all $\eps\leq \eps_1$, $M< \eps \psi^{-1}(1/t)$, and $\psi^{-1}(1/t)\geq x_0$.
By letting $t\rightarrow 0$ and then letting $M\rightarrow\infty$ in \eqref{eqn:up to M} 
and \eqref{eqn:after M}, hen the conclusion of the lemma. 
\qed

\begin{lemma}\label{lemma:limit positive distance}
Suppose that $\mathbf{X}$ is a symmetric L\'evy process with the characteristic
exponent $\psi\in\mathcal{R}_\alpha$ for some $\alpha\in(1,2]$.
There exists $\eps_{2}>0$ such that for all $\eps\leq \eps_{2}$,
$$
\lim_{t\rightarrow 0}
\psi^{-1}(1/t)\int_{0}^{\eps}\PP(\overline{X}_{t}\geq x)\ud x
=\EE[\overline{S^{(\alpha)}}_{1}].
$$
\end{lemma}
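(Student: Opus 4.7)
The plan is to mimic the proof of Lemma \ref{lemma:hitting a point}, now working with the running maximum $\overline{X}_t$ in place of the hitting time $T_y$. First I would introduce the same rescaled process $Y^{(t)}_s := \psi^{-1}(1/t)X_{ts}$, whose characteristic exponent $\psi^{(t)}(\xi) = t\psi(\psi^{-1}(1/t)\xi)$ converges pointwise to $|\xi|^\alpha$ by the regular variation of $\psi$. The change of variables $u = \psi^{-1}(1/t)x$ yields
\begin{align*}
\psi^{-1}(1/t)\int_0^{\eps}\PP(\overline{X}_t\geq x)\,\ud x \;=\; \int_0^{\eps\psi^{-1}(1/t)}\PP(\overline{Y^{(t)}}_1\geq u)\,\ud u,
\end{align*}
reducing the claim to controlling the integral on the right as $t\to 0$.

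Next I would establish pointwise convergence of the integrand. Convergence of characteristic exponents gives $\mathbf{Y}^{(t)}\Rightarrow\mathbf{S}^{(\alpha)}$ in the Skorokhod topology on $D([0,1])$ (via \cite[Theorem II.19]{Ber}). The supremum functional is continuous at c\`adl\`ag paths that do not attain their maximum at a jump at time $1$, an event of full measure under $\mathbf{S}^{(\alpha)}$, so $\overline{Y^{(t)}}_1\Rightarrow\overline{S^{(\alpha)}}_1$. Since $\overline{S^{(\alpha)}}_1$ has a continuous distribution, for every $u>0$,
\begin{align*}
\lim_{t\to 0}\PP(\overline{Y^{(t)}}_1\geq u) \;=\; \PP(\overline{S^{(\alpha)}}_1\geq u).
\end{align*}

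To pass the limit through the integral I would reuse the domination argument from the proof of Lemma \ref{lemma:hitting a point}. Fix $\delta\in(0,\alpha-1)$ and set $\eps_2:=1/x_0$ with $x_0$ as in \eqref{eqn:LDCT1}. Pruitt's bound \eqref{eq:Pruitt} applied to $\mathbf{Y}^{(t)}$, combined with \eqref{eqn:comparable1}, \eqref{eqn:LDCT2} and \eqref{eqn:LDCT1}, yields a constant $c>0$ (independent of $t$) such that
\begin{align*}
\PP(\overline{Y^{(t)}}_1\geq u)\;\leq\; C_1 h^{\mathbf{Y}^{(t)}}(u)\;\leq\; \frac{c}{u^{\alpha-\delta}},\qquad 1\leq u\leq \eps\psi^{-1}(1/t),\; \eps\leq\eps_2.
\end{align*}
Splitting the integral at an auxiliary threshold $M$, using bounded convergence on $[0,M]$ and the uniform tail bound on $[M,\eps\psi^{-1}(1/t)]$, then letting first $t\to 0$ and then $M\to\infty$, gives
\begin{align*}
\lim_{t\to 0}\int_0^{\eps\psi^{-1}(1/t)}\PP(\overline{Y^{(t)}}_1\geq u)\,\ud u \;=\; \int_0^\infty\PP(\overline{S^{(\alpha)}}_1\geq u)\,\ud u \;=\; \EE[\overline{S^{(\alpha)}}_1],
\end{align*}
and the same tail bound shows $\EE[\overline{S^{(\alpha)}}_1]<\infty$ since $\alpha-\delta>1$. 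The main obstacle is the functional convergence $\overline{Y^{(t)}}_1\Rightarrow\overline{S^{(\alpha)}}_1$, since the supremum is not a continuous functional on all of $D([0,1])$; once this is secured via the stochastic continuity of $\mathbf{S}^{(\alpha)}$ at time $1$, the remainder is a straightforward adaptation of the dominated convergence scheme already executed for the hitting times.
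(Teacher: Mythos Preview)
Your proposal is correct and follows essentially the same route as the paper: rescale to $\mathbf{Y}^{(t)}$, change variables, establish $\overline{Y^{(t)}}_1\Rightarrow\overline{S^{(\alpha)}}_1$ via functional convergence, and then recycle the domination scheme from Lemma~\ref{lemma:hitting a point}. Two minor remarks: the paper obtains the functional convergence from \eqref{eq:Pruitt}, \eqref{eq:charexplim} and \cite[Theorem VI.5.5]{Gihman} (not \cite[Theorem II.19]{Ber}, which concerns hitting distributions), and the supremum $x\mapsto\sup_{s\in[0,1]}x(s)$ is in fact continuous everywhere on the Skorokhod space $D([0,1])$, so your caveat about discontinuity points is unnecessary.
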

\pf
Let $\mathbf{Y}^{(t)}$ be the process defined in the proof of Lemma \ref{lemma:hitting a point}.
Recall from the proof of Lemma \ref{lemma:hitting a point} that $\psi^{(t)}$ is the characteristic exponent of $\mathbf{Y}^{(t)}$ and 
\beq\label{eq:charexplim}
\displaystyle\lim_{t\to 0}\psi^{(t)}(\xi)=|\xi|^{\alpha}.
\eeq
Moreover,
$$
\psi^{-1}(1/t)\int^\eps_0\PP\left(\overline{X}_t\geq x\right)\ud x=\psi^{-1}(1/t)\int^\eps_0\PP\left(\overline{Y^{(t)}}_1\geq \psi^{-1}(1/t) x\right)\ud x = \int^{\eps\psi^{-1}(1/t)}_0\PP\left(\overline{Y^{(t)}_1}\geq  x\right)\ud x .
$$
Using \eqref{eq:Pruitt}, \eqref{eq:charexplim}, and  \cite[Theorem VI.5.5]{Gihman},
we can get
that $\overline{Y^{(t)}}_1\xrightarrow{D} \overline{S^{(\alpha)}}_1$ (since $x\mapsto\sup_{t\in[0,1]}x(t)$ is a continuous functional on the Skorohod space). 
The rest of the proof is identical to the proof of Lemma \ref{lemma:hitting a point}.
\qed

\begin{lemma}\label{lemma:positive distance}
Suppose that $\mathbf{X}$ is a symmetric L\'evy process with the characteristic
exponent $\psi\in\mathcal{R}_\alpha$ for some $\alpha\in(1,2]$.
Let $\Omega=\cup_{i}(a_{i},b_{i})$ with $|\Omega|=\sum_{i}(b_{i}-a_{i})<\infty$. 
Suppose that $a_{i}\notin \partial^{ad}\Omega$ and 
$\eps<\frac12((b_{i}-a_{i})\wedge \eps_{2})$. Then we have
$$
\lim_{t\rightarrow 0}\psi^{-1}(1/t)\int_{a_{i}}^{a_{i}+\eps}
\PP_{x}(\tau_{\Omega}\leq t)\ud x=\EE[\overline{S^{(\alpha)}}_{1}].
$$
Similarly, if $b_{i}\notin\partial^{ad}\Omega$ and 
$\eps<\frac12((b_{i}-a_{i})\wedge \eps_{2})$, then
$$
\lim_{t\rightarrow 0}\psi^{-1}(1/t)\int_{b_{i}-\eps}^{b_{i}}
\PP_{x}(\tau_{\Omega}\leq t)\ud x=\EE[\overline{S^{(\alpha)}}_{1}].
$$
\end{lemma}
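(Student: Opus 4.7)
The plan is to sandwich $\int_{a_{i}}^{a_{i}+\eps}\PP_{x}(\tau_{\Omega}\leq t)\,\ud x$ between two quantities whose limit, after multiplication by $\psi^{-1}(1/t)$, is $\EE[\overline{S^{(\alpha)}}_{1}]$ via Lemma \ref{lemma:limit positive distance}. Morally, when $x$ lies just to the right of $a_{i}$, the exit from $\Omega$ for small $t$ is well approximated by the first downcrossing of level $a_{i}$ by $\mathbf{X}$, and the reflection at $b_{i}$ is too far away to contribute. The hypothesis $a_{i}\notin\partial^{ad}\Omega$ provides, in the relevant setting (components of $\tilde\Omega$ at positive distance from one another), a $\delta>0$ such that $(a_{i}-\delta,a_{i})\cap\Omega=\emptyset$; I will use this separation throughout.

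For the upper bound, $(a_{i},b_{i})\subset\Omega$ gives $\tau_{\Omega}\leq \tau_{(a_{i},b_{i})}$, so by the symmetry of $\mathbf{X}$,
\[
\PP_{x}(\tau_{\Omega}\leq t)\leq \PP(\underline{X}_{t}\leq a_{i}-x)+\PP(\overline{X}_{t}\geq b_{i}-x)=\PP(\overline{X}_{t}\geq x-a_{i})+\PP(\overline{X}_{t}\geq b_{i}-x).
\]
Since $\eps<(b_{i}-a_{i})/2$, we have $b_{i}-x>(b_{i}-a_{i})/2$, and Pruitt's estimate \eqref{eq:Pruitt} gives $\PP(\overline{X}_{t}\geq b_{i}-x)\leq C_{1}th((b_{i}-a_{i})/2)=O(t)$ uniformly in $x\in (a_{i},a_{i}+\eps)$.

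For the lower bound, put $\sigma_{x}:=\inf\{s\geq 0:X_{s}+x\leq a_{i}\}$, so $\{\sigma_{x}\leq t\}=\{\underline{X}_{t}\leq a_{i}-x\}$. On the sub-event $\{X_{\sigma_{x}}+x\in\Omega^{c}\}$ one has $\tau_{\Omega}\leq\sigma_{x}\leq t$, and therefore
\[
\PP_{x}(\tau_{\Omega}\leq t)\geq \PP(\sigma_{x}\leq t)-\PP(\sigma_{x}\leq t,\,X_{\sigma_{x}}+x\in\Omega).
\]
The first term equals $\PP(\overline{X}_{t}\geq x-a_{i})$ by symmetry. For the second, the gap $(a_{i}-\delta,a_{i})\subset\Omega^{c}$ forces $X_{\sigma_{x}}+x\leq a_{i}-\delta$ on the event in question, hence $\underline{X}_{t}\leq -\delta$, which by \eqref{eq:Pruitt} and symmetry is bounded by $C_{1}th(\delta)=O(t)$, uniformly in $x$.

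Integrating both bounds over $x\in(a_{i},a_{i}+\eps)$ and translating by $a_{i}$ yields
\[
\int_{0}^{\eps}\PP(\overline{X}_{t}\geq y)\,\ud y-O(t)\leq \int_{a_{i}}^{a_{i}+\eps}\PP_{x}(\tau_{\Omega}\leq t)\,\ud x\leq \int_{0}^{\eps}\PP(\overline{X}_{t}\geq y)\,\ud y+O(t).
\]
Multiplying by $\psi^{-1}(1/t)$, the main term converges to $\EE[\overline{S^{(\alpha)}}_{1}]$ by Lemma \ref{lemma:limit positive distance} (note $\eps\leq \eps_{2}$), while the error vanishes since $\psi\in\mathcal{R}_{\alpha}$ with $\alpha>1$ forces $\psi^{-1}\in\mathcal{R}_{1/\alpha}$ with index strictly below $1$, and therefore $t\,\psi^{-1}(1/t)\to 0$. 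The analogous statement for $b_{i}$ follows by replacing $\mathbf{X}$ by $-\mathbf{X}$. The one delicate step is the ``false exit'' contribution in the lower bound, where a downcrossing of $a_{i}$ might jump into another component of $\Omega$; the separation $\delta>0$ below $a_{i}$ reduces this to an event of probability $O(t)$, which is negligible at the scale $\psi^{-1}(1/t)$.
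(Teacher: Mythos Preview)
Your proof is correct and follows the same strategy as the paper: sandwich $\PP_x(\tau_\Omega\le t)$ between $\PP_x(\underline{X}_t\le a_i)$ plus or minus $O(t)$ terms controlled by Pruitt's inequality~\eqref{eq:Pruitt}, then invoke Lemma~\ref{lemma:limit positive distance} and $t\,\psi^{-1}(1/t)\to 0$; the paper reaches these same bounds via an explicit inclusion--exclusion on $\{\tau_\Omega>t\}$ rather than your direct containment arguments, but the upshot is identical. Two small remarks: the line ``$(a_i,b_i)\subset\Omega$ gives $\tau_\Omega\le\tau_{(a_i,b_i)}$'' has the inequality reversed (you mean $\tau_{(a_i,b_i)}\le\tau_\Omega$, and the bound you then write is the correct one), and your gap hypothesis $(a_i-\delta,a_i)\cap\Omega=\emptyset$ is precisely the paper's tacit assertion that $b:=\sup\{y\in\Omega:y<a_i\}<a_i$.
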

\pf
By the symmetry of $\textbf{X}$ it is enough to prove the first limit.
Suppose that $a_{i}\notin\partial^{ad}\Omega$, $\eps<\frac{b_{i}-a_{i}}{2}$ and $x\in (a_i, a_i+\eps)$.
Note that, under $\PP_x$, the event $\{\tau_{\Omega}>t\}$ can be written as
\begin{align*}
\{\tau_{\Omega}>t\}
&=\{a_{i}<\underline{X}_t\le \overline{X}_t<a_{i}+2\eps\}
\cup \{\tau_{\Omega}>t, \underline{X}_t< a_{i}\}
\cup \{\tau_{\Omega}>t, \overline{X}_t\geq a_{i}+2\eps\}.
\end{align*}
Note that the first event  of the display above is disjoint with the union of the last two events.
Hence we have
\begin{align*}
\PP_{x}(\tau_{\Omega}>t)
=\ \PP_{x}(a_{i}<\underline{X}_t\le \overline{X}_t<a_{i}+2\eps)
+\ \PP_{x}(\{\tau_{\Omega}>t, \underline{X}_t< a_{i}\}
\cup \{\tau_{\Omega}>t, \overline{X}_t\geq a_{i}+2\eps\}).
\end{align*}
This implies that 
\begin{align}\label{eqn:positive2}
&\PP_{x}(\tau_{\Omega}\leq t)\\
&=\ 1-\PP_{x}(a_{i}<\underline{X}_t\le \overline{X}_t<a_{i}+2\eps)
-\ \PP_{x}(\{\tau_{\Omega}>t, \underline{X}_t\leq a_{i}\}
\cup \{\tau_{\Omega}>t, \overline{X}_t\geq a_{i}+2\eps\})\nn\\
&= \PP_{x}(\{ \underline{X}_t\leq a_{i}\}\cup \{ \overline{X}_t\geq a_{i}+2\eps\} )
-\ \PP_{x}(\{\tau_{\Omega}>t, \underline{X}_t\leq a_{i}\}
\cup \{\tau_{\Omega}>t,\overline{X}_t\geq a_{i}+2\eps\})\nn\\
&=\ \PP_{x}(\underline{X}_t\leq a_{i})+\ \PP_{x}(\overline{X}_t\geq a_{i}+2\eps) 
-\ \PP_{x}(\overline{X}_t\geq a_{i}+2\eps \text{ and } \underline{X}_t\leq a_{i})\nn\\
&\quad  -\ \PP_{x}(\tau_{\Omega}>t, \underline{X}_t\leq a_{i})
-\ \PP_{x}(\tau_{\Omega}>t, \overline{X}_t\geq a_{i}+2\eps)\nn\\
& \quad 
+\ \PP_{x}(\tau_{\Omega}>t, \underline{X}_t\leq a_{i}\text{ and } \overline{X}_t\geq a_{i}+2\eps).\nn
\end{align}
Let $b:=\sup\{x\in \Omega: x< a_{i}\}$. Since $a_{i}\notin\partial^{ad}\Omega$, we have either $\{x\in \Omega: x< a_{i}\}=\emptyset$ thus $b=-\infty$ or $b<a_{i}$.
Hence we have either
$$
\{\tau_{\Omega}>t, \underline{X}_t\leq a_{i}\}=\emptyset
$$
or
\beq\label{eqn:positive3}
\{\tau_{\Omega}>t, \underline{X}_t\leq a_{i}\}=\{\tau_{\Omega}>t, \underline{X}_t\leq b\}.
\eeq
We will deal with the second case since the first case is similar and much easier. 
It follows from \eqref{eqn:positive3} that \eqref{eqn:positive2} can be written as
\begin{align*}
\PP_{x}(\tau_{\Omega}\leq t)
=&\ \PP_{x}(\underline{X}_t\leq a_{i})+\ \PP_{x}(\overline{X}_t\geq a_{i}+2\eps)
 - \PP_{x}(\overline{X}_t\geq a_{i}+2\eps \text{ and } \underline{X}_t\leq a_{i})\nn\\
& - \PP_{x}(\tau_{\Omega}>t,\underline{X}_t\leq b)
 - \PP_{x}(\tau_{\Omega}>t, \overline{X}_t\geq a_{i}+2\eps)\\
& +\ \PP_{x}(\tau_{\Omega}>t, \underline{X}_t\leq a_{i}\text{ and } \overline{X}_t\geq a_{i}+2\eps ).
\end{align*}
Hence 
\begin{eqnarray*}
\PP_{x}(\underline{X}_t\leq a_{i})-\ 2\PP_{x}(\overline{X}_t\geq a_{i}+2\eps)-\PP_{x}(\underline{X}_t\leq b)&\leq &\PP_{x}(\tau_{\Omega}\leq t)\\&\leq& \PP_{x}(\underline{X}_t\leq a_{i})+\ 2\PP_{x}(\overline{X}_t\geq a_{i}+2\eps).
\end{eqnarray*}
Note that by the symmetry of $\mathbf{X}$ we have
$$
\int_{a_{i}}^{a_{i}+\eps}
\PP_{x}(\underline{X}_t\leq a_{i})\ud x
=\int_{0}^{\eps}\PP(\overline{X}_{t}\geq y)\ud y.
$$
Hence from Lemma \ref{lemma:limit positive distance},
$$
\lim_{t\rightarrow 0}\psi^{-1}(1/t)\int_{a_{i}}^{a_{i}+\eps}
\PP_{x}(\underline{X}_t\leq a_{i})\ud x
=\EE[\overline{S^{(\alpha)}}_{1}].
$$
By \eqref{eq:Pruitt} we have
$$
\int_{a_{i}}^{a_{i}+\eps}\PP_{x}(\overline{X}_t\geq a_{i}+2\eps)\ud x
=\int_{\eps}^{2\eps}\PP(\overline{X}_t\ge y)\ud y
\leq C_1 t \eps h(\eps).
$$
Since $\psi^{-1}\in \mathcal{R}_{1/\alpha}$ (see \cite[Theorems 1.5.3 and 1.5.12]{bgt}), we have
$$
\lim_{t\rightarrow 0}\psi^{-1}(1/t)\int_{a_{i}}^{a_{i}+\eps}
\PP_{x}(\overline{X}_t\geq a_{i}+2\eps)\ud x=0.
$$
Since $a_i-b>0$,  by the symmetry of $\mathbf{X}$ and the same argument as above we get
\begin{align*}
\lim_{t\rightarrow 0}\psi^{-1}(1/t)\int_{a_{i}}^{a_{i}+\eps}\PP_{x}(\underline{X}_t\leq b)\ud x=0.
\end{align*}
The proof is now complete.
\qed

\begin{lemma}\label{lemma:zero distance}
Suppose that $\mathbf{X}$ is a symmetric L\'evy process with the characteristic
exponent $\psi\in\mathcal{R}_\alpha$ for some $\alpha\in(1,2]$.
Let $\Omega=\cup_i(a_i,b_i)$ 
with $|\Omega|=\sum_{i}(b_{i}-a_{i})<\infty$. 
If $a_{i}\in\partial^{ad}\Omega$ and 
$\eps<\frac12((b_{i}-a_{i})\wedge \eps_1)$, then
$$
\lim_{t\rightarrow 0}\psi^{-1}(1/t)\int_{a_{i}}^{a_{i}+\eps}
\PP_{x}(\tau_{\Omega}\leq t)\ud x
=\int_{0}^{\infty}\PP(T_{u}^{(\alpha)}\leq 1)\ud u.
$$
Similarly, if $b_{i}\in\partial^{ad}\Omega$ and 
$\eps<\frac12((b_{i}-a_{i})\wedge \eps_1)$, then
$$
\lim_{t\rightarrow 0}\psi^{-1}(1/t)\int_{b_{i}-\eps}^{b_{i}}
\PP_{x}(\tau_{\Omega}\leq t)\ud x
=\int_{0}^{\infty}\PP(T_{u}^{(\alpha)}\leq 1)\ud u.
$$
\end{lemma}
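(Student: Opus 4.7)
The plan is to mimic the structure of Lemma~\ref{lemma:positive distance}, but to replace the role of the ``drop below $a_i$'' event by the event ``$\mathbf{X}$ hits the single point $a_i$''. Since $a_i\in\partial^{ad}\Omega$, there is an adjacent component $(a_j,a_i)\subset\Omega$ sitting immediately to the left, so $\Omega^c\cap(a_j,b_i)=\{a_i\}$; in other words, $a_i$ is an isolated point of $\Omega^c$ locally, and hitting it is now the dominant exit mechanism. Using the trivial inclusion $\{T_{a_i}\leq t\}\subset\{\tau_\Omega\leq t\}$ (valid because $a_i\in\Omega^c$), the natural starting decomposition is
$$
\PP_x(\tau_\Omega\leq t) = \PP_x(T_{a_i}\leq t) + \PP_x(\tau_\Omega\leq t,\, T_{a_i}>t).
$$

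For the main term, spatial homogeneity and the symmetry of $\mathbf{X}$ give
$$
\int_{a_i}^{a_i+\eps}\PP_x(T_{a_i}\leq t)\,\ud x = \int_0^\eps \PP(T_y\leq t)\,\ud y,
$$
and since $\eps\leq\eps_1$, Lemma~\ref{lemma:hitting a point} yields the limit $\int_0^\infty\PP(T_u^{(\alpha)}\leq 1)\,\ud u$ after multiplication by $\psi^{-1}(1/t)$. The harder part is to show that the error term $\PP_x(\tau_\Omega\leq t,\,T_{a_i}>t)$ is negligible after rescaling. The key geometric observation is that every point of $\Omega^c\setminus\{a_i\}$ lies at distance at least $D:=\min(a_i-a_j,\, b_i-a_i-\eps)>0$ from any $x\in(a_i,a_i+\eps)$, because $(a_j,a_i)\cup(a_i,b_i)\subset\Omega$ and the adjacent endpoint $a_j$ itself belongs to $\Omega^c$. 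On the event $\{\tau_\Omega\leq t,\,T_{a_i}>t\}$ one has $\tau_\Omega<T_{a_i}$, hence $X_{\tau_\Omega}\in\Omega^c\setminus\{a_i\}$, which forces $\sup_{s\leq t}|X_s-x|\geq D$; Pruitt's inequality \eqref{eq:Pruitt} then gives the uniform bound $\PP_x(\tau_\Omega\leq t,\,T_{a_i}>t)\leq C_1 t h(D)$.

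Integrating this bound over $(a_i,a_i+\eps)$ and multiplying by $\psi^{-1}(1/t)$ produces an upper bound of order $\eps\,C_1 h(D)\cdot t\psi^{-1}(1/t)$. Because $\psi^{-1}\in\mathcal{R}_{1/\alpha}$, the factor $t\psi^{-1}(1/t)$ is regularly varying of positive index $1-1/\alpha$ at $t=0$ and therefore tends to $0$; so the error term vanishes in the limit and, combined with the main term, yields the first asserted identity. The second limit, at the endpoint $b_i\in\partial^{ad}\Omega$, follows by the identical argument after reflection, using the symmetry of $\mathbf{X}$. The main obstacle is really just the single geometric observation that adjacency isolates $a_i$ in $\Omega^c$ by a uniform positive gap $D$; once that is in hand, everything reduces to Pruitt's estimate together with the asymptotic already supplied by Lemma~\ref{lemma:hitting a point}.
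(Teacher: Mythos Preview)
Your proof is correct and follows essentially the same route as the paper's. Both arguments sandwich $\PP_x(\tau_\Omega\leq t)$ by $\PP_x(T_{a_i}\leq t)$ plus an error term arising from the event that $\mathbf{X}$ exits $\Omega$ without touching $a_i$; the paper phrases this error as $\{\overline{X}_t\geq a_i+2\eps\text{ or }\underline{X}_t\leq a_j\}$ and refers back to the Pruitt-type estimates in Lemma~\ref{lemma:positive distance}, while you phrase it via $X_{\tau_\Omega}\in\Omega^c\setminus\{a_i\}$ and the uniform gap $D$, but the resulting bound $C_1 t\,h(\text{const})$ and the use of $t\psi^{-1}(1/t)\to 0$ (from $\psi^{-1}\in\mathcal{R}_{1/\alpha}$, $\alpha>1$) are identical, and both finish with Lemma~\ref{lemma:hitting a point}.
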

\pf
Suppose that $a_{i}\in\partial^{ad}\Omega$, $\eps<\frac{b_{i}-a_{i}}{2}$ and $x\in (a_i, a_i+\eps)$.
Let $(a_j, b_j)$ with $b_j=a_i$ be the component of $\Omega$ which is adjacent to $(a_i, b_i)$.
Then we have under $\PP_x$,
\begin{eqnarray*}
\{\tau_{\Omega}\leq t\}
&=&\{T_{a_{i}}\leq t\}\cup \{\tau_{\Omega}\leq t, T_{a_{i}}>t\}\subset\{T_{a_{i}}\leq t\}\cup \{\overline{X}_{t}\geq a_{i}+
2\eps \text{ or } \underline{X}_{t}\leq a_j\}.
\end{eqnarray*}
It follows from an argument similar to that in the proof of  Lemma \ref{lemma:positive distance} we have
$$
\lim_{t\rightarrow 0}\psi^{-1}(1/t)\int_{a_{i}}^{a_{i}+\eps}
\PP_{x}(\overline{X}_{t}\geq a_{i}+
2\eps \text{ or } \underline{X}_{t}\leq a_j)\ud x=0.
$$
Hence we have
$$
\limsup_{t\rightarrow 0}\psi^{-1}(1/t)\int_{a_{i}}^{a_{i}+\eps}
\PP_{x}(\tau_{\Omega}\leq t)\ud x\leq 
\limsup_{t\rightarrow 0}\psi^{-1}(1/t)\int_{a_{i}}^{a_{i}+\eps}
\PP_{x}(T_{a_{i}}\leq t)\ud x
$$
and
$$
\liminf_{t\rightarrow 0}\psi^{-1}(1/t)\int_{a_{i}}^{a_{i}+\eps}
\PP_{x}(\tau_{\Omega}\leq t)\ud x\geq 
\liminf_{t\rightarrow 0}\psi^{-1}(1/t)\int_{a_{i}}^{a_{i}+\eps}
\PP_{x}(T_{a_{i}}\leq t)\ud x.
$$
Now using Lemma \ref{lemma:hitting a point} we obtain the claim.
\qed

Now we state a result handling the case when $\Omega$ has infinitely many components.
\begin{lemma}\label{lemma:many components1}
Suppose that $\mathbf{X}$ is a symmetric L\'evy process with the characteristic
exponent $\psi\in\mathcal{R}_\alpha$ for some $\alpha\in(1,2]$.
If $\Omega\subset \R$ is of finite Lebesgue measure and has infinitely many components, then
$$
\liminf_{t \rightarrow 0}\psi^{-1}(1/t)\left(|\Omega|-Q_{\Omega}(t)\right)=\infty.
$$
\end{lemma}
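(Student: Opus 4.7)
The plan is to bound $|\Omega|-Q_{\Omega}(t) = \int_{\Omega}\PP_x(\tau_\Omega\leq t)\,\ud x$ from below by restricting integration to small neighborhoods of the endpoints of an arbitrarily large but finite number of components, and then to invoke Lemmas \ref{lemma:positive distance} and \ref{lemma:zero distance} to obtain a strictly positive contribution from each such endpoint. Infinitely many components then force the liminf to diverge.

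Concretely, I would enumerate the components of $\Omega$ as $(a_i, b_i)$ for $i \geq 1$ (there are countably many since $\R$ is second countable). Fix $N \in \NN$ and, for each $1 \leq i \leq N$, set $\eps_i := \tfrac{1}{4}\big((b_i - a_i) \wedge \eps_1 \wedge \eps_2\big)$, where $\eps_1, \eps_2$ are the thresholds supplied by Lemmas \ref{lemma:hitting a point} and \ref{lemma:limit positive distance}. This choice ensures the applicability of both Lemma \ref{lemma:positive distance} and Lemma \ref{lemma:zero distance} at the two endpoints of $(a_i, b_i)$, and the $2N$ intervals $(a_i, a_i+\eps_i)$ and $(b_i-\eps_i, b_i)$ are pairwise disjoint subsets of $\Omega$. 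Hence
$$
\psi^{-1}(1/t)\big(|\Omega|-Q_{\Omega}(t)\big) \;\geq\; \sum_{i=1}^{N}\psi^{-1}(1/t)\left[\int_{a_i}^{a_i+\eps_i}\PP_x(\tau_\Omega \leq t)\,\ud x \;+\; \int_{b_i-\eps_i}^{b_i}\PP_x(\tau_\Omega \leq t)\,\ud x\right].
$$

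Next I would take $\liminf_{t\to 0}$ on both sides. For each of the $2N$ inner integrals, exactly one of Lemmas \ref{lemma:positive distance} and \ref{lemma:zero distance} applies, according to whether the relevant endpoint lies in $\partial^{ad}\Omega$, and in either case the limit exists and equals either $\EE[\overline{S^{(\alpha)}}_1]$ or $C_1 = \int_0^\infty \PP(T_u^{(\alpha)}\leq 1)\,\ud u$. Both constants are strictly positive: $\EE[\overline{S^{(\alpha)}}_1] > 0$ trivially, and $C_1 > 0$ because, for $\alpha \in (1,2]$, the symmetric $\alpha$-stable process on $\R$ hits every point with positive probability, so $\PP(T_u^{(\alpha)}\leq 1)>0$ for each $u>0$. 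Setting $c_0 := C_1 > 0$ we obtain
$$
\liminf_{t\to 0}\psi^{-1}(1/t)\big(|\Omega|-Q_{\Omega}(t)\big) \;\geq\; 2 N c_0,
$$
and letting $N \to \infty$ yields the conclusion.

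The argument is structurally very simple; the only minor bookkeeping concern is choosing $\eps_i$ small enough for each of the two endpoint lemmas to apply simultaneously, which is handled by the choice above, together with verifying $C_1 > 0$, which follows from standard hitting-probability facts for $\alpha$-stable processes on $\R$ with $\alpha > 1$. I do not anticipate any serious obstacle.
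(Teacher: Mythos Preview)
Your argument is correct and follows essentially the same route as the paper: both bound $|\Omega|-Q_\Omega(t)$ from below by the contributions near finitely many endpoints, invoke Lemmas \ref{lemma:positive distance} and \ref{lemma:zero distance} to get a strictly positive limit at each, and let $N\to\infty$. The only cosmetic difference is that the paper first splits into the cases $A=\infty$ and $B=\infty$ and applies a single lemma in each case, whereas you treat both endpoints of the first $N$ components at once and use $c_0=C_1\le \EE[\overline{S^{(\alpha)}}_1]$ as a uniform lower bound; this is slightly cleaner but not a different idea.
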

\pf
If $\Omega$ has infinitely many components, either $A$, the number of components in $\tilde{\Omega}$, or $B$, 
the number of points in $\partial^{ad}\Omega$, is
infinite. 
Suppose that $A=\infty$. Let $\Omega=\cup_{i=1}^{\infty}(a_{i},b_{i})$. 
Then there must be infinitely many $i$ such that $a_{i}\notin\partial^{ad}\Omega$ or $b_{i}\notin\partial^{ad}\Omega$.
Let $\mathcal{I}=\{i: a_{i}\notin\partial^{ad}\Omega \text{ or } b_{i}\notin\partial^{ad}\Omega\}$. 
Given $N$, take $\eps=\eps(N)$ small so that 
there are at least $N$ many $i$'s 
with $\eps<\frac12((b_{i}-a_{i})\wedge \eps_1)$.
Then it follows from Lemma \ref{lemma:positive distance} we have
\begin{align*}
&\liminf_{t\rightarrow 0}\psi^{-1}(1/t)\left(|\Omega|-Q_{\Omega}(t)\right)\\
&\geq\liminf_{t\rightarrow 0}\left( \sum_{i\in\mathcal{I}}\psi^{-1}(1/t)\left(\int_{a_{i}}^{a_{i}+\eps}\PP_{x}(\tau_{\Omega}\leq t)\ud x+\int_{b_{i}-\eps}^{b_{i}}\PP_{x}(\tau_{\Omega}\leq t)\ud x\right)\right)\\
&\geq N\EE[\overline{S^{(\alpha)}}_{1}].
\end{align*}
Now the assertion follows by letting $N\rightarrow\infty$.

The case when $B=\infty$ can be proved in a similar way using Lemma \ref{lemma:zero distance}.
\qed

Now we are ready to prove Theorem \ref{thm:general open sets}.

\textbf{Proof of Theorem \ref{thm:general open sets}}
If $\Omega$ has infinitely many components, the result follows from Lemma \ref{lemma:many components1}.
Now assume that $\Omega$ has finitely many components. 
Write $\Omega=\cup_{i=1}^{N}(a_{i},b_{i})$ and let 
$\eps=\frac12\min_{1\leq i\leq N}\left((b_{i}-a_{i})\wedge \eps_1\wedge\eps_2\right)$,
where $\eps_1$ and $\eps_2$ are constants in Lemmas \ref{lemma:hitting a point} and \ref{lemma:limit positive distance}, respectively.
Let $\mathcal{B}$ be the set of points which are the common end point of two adjacent components of $\Omega$
and $\mathcal{A}=\cup_{i=1}^{N}\{a_{i},b_{i}\}\setminus \mathcal{B}$. 
Then $|\mathcal{A}|=2A$, $|\mathcal{B}|=B$, and $A+B=N$.
It follows from 
Lemmas \ref{lemma:inside}, \ref{lemma:positive distance} and \ref{lemma:zero distance} that
\begin{align*}
&\lim_{t\rightarrow 0}\psi^{-1}(1/t)\left(|\Omega|-Q_{\Omega}(t)\right)\\
&=\lim_{t\rightarrow 0}\psi^{-1}(1/t)\left(\int_{\Omega\setminus \Omega_{\eps}}
\PP_{x}(\tau_{\Omega}\leq t)\ud x\right)\\
&\quad +\lim_{t\rightarrow 0}\psi^{-1}(1/t)\sum_{a_{i},b_{k}\in \mathcal{A}}\left(\int_{a_{i}}^{a_{i}+\eps}\PP_{x}(\tau_{\Omega}\leq t)\ud x+\int_{b_{k}-\eps}^{b_{k}}\PP_{x}(\tau_{\Omega}\leq t)\ud x\right)\\
&\quad + \lim_{t\rightarrow 0}\psi^{-1}(1/t)\sum_{a_{j},b_{j-1}\in \mathcal{B}}\left(\int_{a_{j}}^{a_{j}+\eps}\PP_{x}(\tau_{\Omega}\leq t)\ud x+\int_{b_{j-1}-\eps}^{b_{j-1}}\PP_{x}(\tau_{\Omega}\leq t)\ud x\right)\\
&=2A\EE[\overline{S^{(\alpha)}}_{1}]+2BC_{1}.
\end{align*}
\qed

\subsection{Cauchy process}
First we give some preliminary results to prepare for the proof of 
Theorem \ref{thm:mainCauchy}. In this subsection
we assume that  $\mathbf{X}$ is a 
Cauchy process, that is, a symmetric 1-stable L\'{e}vy process, in $\R$ with the characteristic exponent $\psi(\xi)=|\xi|$. 

\begin{lemma}\label{lemma:positive distance 2}
Suppose that $\mathbf{X}$ is a Cauchy process.
Let $\Omega=\cup_i(a_{i},b_{i})$ 
with $|\Omega|=\sum_{i}(b_{i}-a_{i})<\infty$. 
If $a_{i}\notin\partial^{ad}\Omega$ and $\eps<\frac{b_{i}-a_{i}}{2}$, then
$$
\lim_{t\rightarrow 0}\frac{\int_{a_{i}}^{a_{i}+\eps}
\PP_{x}(\tau_{\Omega}\leq t)\ud x}{t\ln(1/t)}=\frac{1}{\pi}.
$$
Similarly, if $b_{i}\notin\partial^{ad}\Omega$ and $\eps <\frac{b_{i}-a_{i}}{2}$, then
$$
\lim_{t\rightarrow 0}\frac{\int_{b_{i}-\eps}^{b_{i}}
\PP_{x}(\tau_{\Omega}\leq t)\ud x}{t\ln(1/t)}=\frac{1}{\pi}.
$$
\end{lemma}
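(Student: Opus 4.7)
I would mimic the proof of Lemma \ref{lemma:positive distance} closely, the only change being that the relevant normalization is $t\ln(1/t)$ rather than $1/\psi^{-1}(1/t)=t$, reflecting the borderline $\alpha=1$ nature of the Cauchy process. By symmetry of $\mathbf{X}$ it is enough to establish the first limit. Fix $x\in(a_i,a_i+\eps)$ and let $b:=\sup\{y\in\Omega: y<a_i\}$; since $a_i\notin\partial^{ad}\Omega$ we have $a_i-b>0$ (with the convention $b=-\infty$ if the set is empty). The same decomposition as in the proof of Lemma \ref{lemma:positive distance} gives
\begin{align*}
\PP_x(\underline{X}_t\le a_i)-2\PP_x(\overline{X}_t\ge a_i+2\eps)-\PP_x(\underline{X}_t\le b) &\le \PP_x(\tau_\Omega\le t)\\
&\le \PP_x(\underline{X}_t\le a_i)+2\PP_x(\overline{X}_t\ge a_i+2\eps).
\end{align*}

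Next I would check that the two error terms contribute $O(t)=o(t\ln(1/t))$ after integration over $(a_i,a_i+\eps)$. For the Cauchy process the Pruitt function satisfies $h(r)\asymp r^{-1}$, so by \eqref{eq:Pruitt},
\[
\int_{a_i}^{a_i+\eps}\PP_x(\overline{X}_t\ge a_i+2\eps)\,\ud x=\int_\eps^{2\eps}\PP(\overline{X}_t\ge y)\,\ud y\le C_1\eps t h(\eps)=O(t),
\]
and analogously $\int_{a_i}^{a_i+\eps}\PP_x(\underline{X}_t\le b)\,\ud x\le C_1\eps t h(a_i-b)=O(t)$. Both terms vanish after dividing by $t\ln(1/t)$.

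For the main term, the symmetry of $\mathbf{X}$ combined with the $1$-stability $\overline{X}_t\stackrel{d}{=}t\overline{X}_1$ yields
\[
\int_{a_i}^{a_i+\eps}\PP_x(\underline{X}_t\le a_i)\,\ud x=\int_0^\eps\PP(\overline{X}_t\ge y)\,\ud y=t\int_0^{\eps/t}\PP(\overline{X}_1\ge u)\,\ud u.
\]
The constant $1/\pi$ is then produced by the sharp tail asymptotic $\PP(\overline{X}_1\ge u)\sim \frac{1}{\pi u}$ as $u\to\infty$: splitting the integral at a large cutoff $M$ and letting first $t\to0$ then $M\to\infty$ gives $\int_0^{\eps/t}\PP(\overline{X}_1\ge u)\,\ud u\sim\frac{1}{\pi}\ln(1/t)$.

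The main obstacle is pinning down the precise constant in $\PP(\overline{X}_1\ge u)\sim\frac{1}{\pi u}$. The order $u^{-1}$ is cheap: Pruitt's estimate gives an upper bound of the right order, while $\PP(\overline{X}_1\ge u)\ge\PP(X_1\ge u)\sim\frac{1}{\pi u}$ (the Cauchy tail) provides a matching lower bound of the correct order. To match the constant, one uses the small-time asymptotic $\PP(\overline{X}_t\ge r)\sim t\nu^+((r,\infty))=\frac{t}{\pi r}$ as $t/r\to0$: the event $\{\overline{X}_t\ge r\}$ is, up to negligible probability, achieved by a single positive jump of size exceeding $r$, and these jumps form a Poisson point process with intensity $\nu^+((r,\infty))=\frac{1}{\pi r}$. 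An alternative route is the Wiener-Hopf factorization for the symmetric Cauchy, whose ascending ladder height is a $1/2$-stable subordinator, allowing direct computation of the supremum's tail. Once this sharp tail is in hand, the rest is routine bookkeeping.
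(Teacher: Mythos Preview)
Your proposal is correct and follows the same route as the paper: the paper's proof simply says to repeat the argument of Lemma~\ref{lemma:positive distance}, replacing Lemma~\ref{lemma:limit positive distance} by \cite[Proposition~4.3(i)]{Valverde3}, which supplies exactly the main-term asymptotic $\int_0^\eps\PP(\overline{X}_t\ge y)\,\ud y\sim\frac{1}{\pi}t\ln(1/t)$ that you derive directly via scaling and the tail $\PP(\overline{X}_1\ge u)\sim\frac{1}{\pi u}$. Your handling of the error terms and the decomposition are identical to the paper's; the only difference is that you sketch a self-contained argument for the input the paper cites.
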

\pf
The proof is almost identical to the proof of Lemma \ref{lemma:positive distance} using \cite[Proposition 4.3 (i)]{Valverde3} instead of Lemma \ref{lemma:limit positive distance}, so we omit the details.
\qed

Now we address the issue when $\Omega$ has adjacent components. 
Recall the definition of augmented set $\tilde{\Omega}$ in \eqref{eqn:augmented set}.
It is well known that, when $0<\alpha\leq 1$, a single point is polar for the process hence $T_{x}=\inf\{s: X_{s}=x\}$ is almost surely infinite.  
Hence we have the following result.
\begin{lemma}\label{lemma:boundary1}
If $\mathbf{X}$ is a 
Cauchy process, then
$Q_{\Omega}(t)=Q_{\tilde{\Omega}}(t)$.
\end{lemma}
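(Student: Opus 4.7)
The plan is to exploit polarity of singletons for the Cauchy process, together with the fact that $\partial^{ad}\Omega$ is at most countable, to show that $\tau_\Omega = \tau_{\tilde\Omega}$ almost surely for every starting point, and then invoke $|\tilde\Omega \setminus \Omega| = 0$ to conclude equality of the integrals.

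First I would write $\Omega = \cup_i(a_i,b_i)$ as an at most countable disjoint union of open intervals, so that $A := \partial^{ad}\Omega$ is a countable subset of $\R$. Since $\Omega \subset \tilde\Omega$, the inequality $\tau_\Omega \leq \tau_{\tilde\Omega}$ holds pathwise, and we only need to rule out strict inequality on a set of positive $\PP_x$-probability. On the event $\{\tau_\Omega < \tau_{\tilde\Omega}\}$, by definition $X_{\tau_\Omega} \in \Omega^c$ while $X_{\tau_\Omega} \in \tilde\Omega$ (because the process has not yet exited $\tilde\Omega$), so $X_{\tau_\Omega} \in \tilde\Omega \setminus \Omega = A$. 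Hence
\begin{equation*}
\{\tau_\Omega < \tau_{\tilde\Omega}\} \subset \bigcup_{y\in A}\{\exists\, s>0: X_s = y\}.
\end{equation*}

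The key input is the polarity statement already cited in the excerpt: for $0 < \alpha \leq 1$, single points are polar for the symmetric $\alpha$-stable process, so $\PP_x(\exists\, s > 0 : X_s = y) = 0$ for every $x,y \in \R$. Because $A$ is countable, countable sub-additivity yields $\PP_x(\tau_\Omega < \tau_{\tilde\Omega}) = 0$ for all $x \in \R$, and consequently $\PP_x(\tau_\Omega > t) = \PP_x(\tau_{\tilde\Omega} > t)$ for all $x$ and all $t > 0$.

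Finally, since $A$ is countable we have $|\tilde\Omega \setminus \Omega| = 0$, so
\begin{equation*}
Q_{\tilde\Omega}(t) = \int_{\tilde\Omega}\PP_x(\tau_{\tilde\Omega} > t)\,\ud x = \int_{\Omega}\PP_x(\tau_{\tilde\Omega} > t)\,\ud x = \int_{\Omega}\PP_x(\tau_{\Omega} > t)\,\ud x = Q_{\Omega}(t),
\end{equation*}
which is the claim. There is no real obstacle here; the only point that deserves a sentence of justification is the pathwise observation that $\{\tau_\Omega < \tau_{\tilde\Omega}\} \subset \{X_{\tau_\Omega} \in A\}$, after which polarity plus countability of $A$ does all the work.
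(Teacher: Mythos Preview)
Your proof is correct and follows essentially the same route as the paper's: both arguments rest on the fact that singletons are polar for the Cauchy process, so the countable set $\partial^{ad}\Omega$ is polar and hence $\tau_\Omega=\tau_{\tilde\Omega}$ almost surely. Your version is in fact more careful than the paper's, which simply asserts ``$\{0\}$ is polar and therefore $\partial\Omega$ is polar'' and jumps directly to $\PP_x(\tau_{\tilde\Omega}>t)=\PP_x(\tau_\Omega>t)$; you spell out the pathwise inclusion $\{\tau_\Omega<\tau_{\tilde\Omega}\}\subset\{X_{\tau_\Omega}\in\partial^{ad}\Omega\}$ and explicitly note that $|\tilde\Omega\setminus\Omega|=0$ to justify equality of the integrals.
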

\pf 
By (\cite[Theoreme 8]{Bret}) $\{0\}$ is polar and therefore $\partial\Omega$ is polar as well. Hence $$\PP_{x}(\tau_{\tilde{\Omega}}>t)=\PP_{x}(\tau_{\Omega}>t)$$ almost surely. This implies the claim. 
\qed

\begin{lemma}\label{lemma:many components2}
Suppose that $\mathbf{X}$ is a Cauchy process. 
If $\Omega$ is of finite Lebesgue measure and
$\tilde{\Omega}=\Omega\cup \partial^{ad}\Omega$ has infinitely many components, then
$$
\liminf_{t \rightarrow 0}\frac{|\Omega|-Q_{\Omega}(t)}{t\ln(1/t)}=\infty.
$$
\end{lemma}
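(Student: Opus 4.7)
The plan is to reduce to $\tilde\Omega$ using Lemma \ref{lemma:boundary1}, and then exploit that the components of $\tilde\Omega$ are pairwise at positive distance, so that each endpoint contributes a full $\frac{1}{\pi}t\ln(1/t)$ term by Lemma \ref{lemma:positive distance 2}.

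First I would note that, by Lemma \ref{lemma:boundary1} and the fact that $\partial^{ad}\Omega$ is countable and hence Lebesgue null,
$$
|\Omega|-Q_{\Omega}(t)=|\tilde{\Omega}|-Q_{\tilde{\Omega}}(t)=\int_{\tilde{\Omega}}\PP_{x}(\tau_{\tilde{\Omega}}\leq t)\,\ud x.
$$
Write $\tilde{\Omega}=\bigcup_{j}(c_{j},d_{j})$ as a disjoint union of open intervals. By construction of the augmented set, distinct components of $\tilde{\Omega}$ are at strictly positive distance; in particular $\partial^{ad}\tilde{\Omega}=\emptyset$, so for every $j$ both $c_{j}$ and $d_{j}$ fail to lie in $\partial^{ad}\tilde{\Omega}$. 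By hypothesis the index set is infinite.

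Given $N\in\NN$, I would pick any $N$ distinct components of $\tilde{\Omega}$, say $(c_{1},d_{1}),\dots,(c_{N},d_{N})$, and then choose
$$
\eps_{N}=\tfrac{1}{4}\min_{1\leq j\leq N}(d_{j}-c_{j})>0,
$$
so that $\eps_{N}<(d_{j}-c_{j})/2$ for every $j=1,\dots,N$. Since $\tau_{\tilde{\Omega}}\leq t$ on each of the $2N$ disjoint strips $(c_{j},c_{j}+\eps_{N})$ and $(d_{j}-\eps_{N},d_{j})$, I would bound
$$
|\Omega|-Q_{\Omega}(t)\geq \sum_{j=1}^{N}\left(\int_{c_{j}}^{c_{j}+\eps_{N}}\PP_{x}(\tau_{\tilde{\Omega}}\leq t)\,\ud x+\int_{d_{j}-\eps_{N}}^{d_{j}}\PP_{x}(\tau_{\tilde{\Omega}}\leq t)\,\ud x\right).
$$
Applying Lemma \ref{lemma:positive distance 2} to $\tilde{\Omega}$ (valid since none of the chosen endpoints is in $\partial^{ad}\tilde{\Omega}$) term by term and using that we have a finite sum so $\liminf$ distributes as a sum of limits, I obtain
$$
\liminf_{t\to 0}\frac{|\Omega|-Q_{\Omega}(t)}{t\ln(1/t)}\geq \sum_{j=1}^{N}\left(\frac{1}{\pi}+\frac{1}{\pi}\right)=\frac{2N}{\pi}.
$$
Letting $N\to\infty$ yields the claim.

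There is no genuine obstacle here; the only subtleties are the reduction to $\tilde{\Omega}$ (handled by Lemma \ref{lemma:boundary1} together with $|\partial^{ad}\Omega|=0$) and checking that Lemma \ref{lemma:positive distance 2} is applicable, which is immediate because the components of $\tilde{\Omega}$ are automatically non-adjacent. The argument is the direct analogue of Lemma \ref{lemma:many components1} with the $\psi^{-1}(1/t)$ rate replaced by the Cauchy-process rate $1/(t\ln(1/t))$.
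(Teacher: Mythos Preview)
Your argument is correct and is essentially the approach the paper indicates (mirroring Lemma \ref{lemma:many components1} with Lemma \ref{lemma:positive distance 2} in place of Lemma \ref{lemma:positive distance}). The only cosmetic difference is that you first pass to $\tilde\Omega$ via Lemma \ref{lemma:boundary1} so that every endpoint is non-adjacent, whereas the template of Lemma \ref{lemma:many components1} works directly with $\Omega$ and selects endpoints $a_i$ or $b_i$ lying outside $\partial^{ad}\Omega$; both routes feed the same $N$ strips into Lemma \ref{lemma:positive distance 2} and let $N\to\infty$.
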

\pf
The proof is very similar to the proof of Lemma \ref{lemma:many components1} using Lemma \ref{lemma:positive distance 2}.
\qed

\begin{theorem}\label{thm:mainCauchy}Let $\mathbf{X}$ be a Cauchy process.
Let $\Omega=\cup_{i}(a_{i},b_{i})$ be an open set in $\R$ with $|\Omega|=\sum_{i}(b_{i}-a_{i})<\infty$.
Let $A$ be the number of components of $\tilde{\Omega}$.
Then we have
$$
\lim_{t\rightarrow 0}\frac{|\Omega|-Q_{\Omega}(t)}{t\ln(1/t)}=
\frac{2A}{\pi}. 
$$
\end{theorem}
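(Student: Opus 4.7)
The plan is to reduce $\Omega$ to its augmented set $\tilde\Omega$, isolate the boundary-layer contributions from the $2A$ endpoints of the components of $\tilde\Omega$, and show that the remaining interior contribution is of smaller order than $t\ln(1/t)$.

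First I would apply Lemma \ref{lemma:boundary1}, which gives $Q_\Omega(t)=Q_{\tilde\Omega}(t)$; since $\partial^{ad}\Omega$ is at most countable this also yields $|\Omega|=|\tilde\Omega|$. By construction of the augmented set, the components of $\tilde\Omega$ have pairwise positive distance, so \emph{every} endpoint of $\tilde\Omega$ is non-adjacent. The case $A=\infty$ is then immediate from Lemma \ref{lemma:many components2}. Otherwise write $\tilde\Omega=\bigcup_{i=1}^{A}(\tilde a_i,\tilde b_i)$ and fix $\eps>0$ smaller than half of $\min_{1\leq i\leq A}(\tilde b_i-\tilde a_i)$.

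Next I would decompose
$$|\tilde\Omega|-Q_{\tilde\Omega}(t)=\int_{\tilde\Omega}\PP_x(\tau_{\tilde\Omega}\leq t)\,\ud x = J_{\mathrm{bd}}(t)+J_{\mathrm{in}}(t),$$
where $J_{\mathrm{bd}}(t)$ is the sum of the $2A$ integrals over the end-strips $(\tilde a_i,\tilde a_i+\eps)$ and $(\tilde b_i-\eps,\tilde b_i)$, and $J_{\mathrm{in}}(t)$ is the integral over $\tilde\Omega\setminus\tilde\Omega_\eps$. Because every endpoint of $\tilde\Omega$ is non-adjacent, Lemma \ref{lemma:positive distance 2} applies to each of the $2A$ end-strips and yields $J_{\mathrm{bd}}(t)/(t\ln(1/t))\to 2A/\pi$. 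For the interior, Lemma \ref{lemma:inside} combined with the estimate $h(\eps)\leq 24\psi^{*}(1/\eps)=24/\eps$ from \eqref{eqn:comparable1} (valid for the Cauchy process $\psi(\xi)=|\xi|$) gives $J_{\mathrm{in}}(t)\leq 24C_1|\tilde\Omega|\,t/\eps$, so $J_{\mathrm{in}}(t)/(t\ln(1/t))\to 0$ as $t\to 0$ for each fixed $\eps$. Adding the two contributions concludes the proof.

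The argument mirrors the proof of Theorem \ref{thm:general open sets}, with the normalizing factor $\psi^{-1}(1/t)$ replaced by $1/(t\ln(1/t))$ and the hitting-point term $2BC_1$ absent because $\{0\}$ is polar for the Cauchy process---which is precisely what justifies the reduction to $\tilde\Omega$ and fuses away the adjacent components. There is no serious obstacle beyond this polarity observation; everything else is a direct invocation of the lemmas already established.
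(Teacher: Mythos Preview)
Your proposal is correct and follows essentially the same approach as the paper. The paper's own proof is just the one-line remark that the argument is ``very similar to the proof of Theorem \ref{thm:general open sets} using Lemmas \ref{lemma:inside}, \ref{lemma:positive distance 2}, \ref{lemma:boundary1}, \ref{lemma:many components2}''; you have spelled out precisely that route---reduce to $\tilde\Omega$ via polarity, dispose of $A=\infty$ by Lemma \ref{lemma:many components2}, and for finite $A$ split into $2A$ end-strips (handled by Lemma \ref{lemma:positive distance 2}) plus an interior piece that is $O(t)$ by Lemma \ref{lemma:inside}.
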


\pf
The proof is very similar to the proof of Theorem \ref{thm:general open sets} using Lemmas \ref{lemma:inside}, \ref{lemma:positive distance 2} and \ref{lemma:boundary1}, \ref{lemma:many components2} and we omit the details.
\qed

\section{Perturbation Results}\label{section:perturbation}
In this section, we assume that $\mathbf{Y}$ is a L\'{e}vy process in $\R^d$ 
with the L\'{e}vy triplet $(A,\gamma,\nu^Y)$ such that
$$
\lim_{t\to0}\frac{|\Omega|-Q^{\mathbf{Y}}_{\Omega}(t)}{f(t)}=C, 
$$
where $\lim_{t\to0}\frac{t}{f(t)}=0$.
Throughout this section, the superscript $\mathbf{Y}$ always means quantities corresponding to the process $\mathbf{Y}$.

Now we assume that $\mathbf{X}$ is a L\'{e}vy process in $\R^d$ with the L\'{e}vy triplet $(A,\gamma,\nu)$ 
such that the signed measure $\sigma(\ud x):=\nu(\ud x)-\nu^{Y}(\ud x)$ has finite total variation $m$.

Here is the main result of this section.
\begin{theorem}\label{thm:main2}
We have
$$
\lim_{t\rightarrow 0}\frac{|\Omega|-Q_{\Omega}(t)}{f(t)}=\lim_{t\rightarrow 0}\frac{|\Omega|-Q^{\mathbf{Y}}_{\Omega}(t)}{f(t)}.
$$
\end{theorem}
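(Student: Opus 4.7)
The plan is to construct an explicit pathwise coupling of $\mathbf{X}$ and $\mathbf{Y}$ on a single probability space such that their trajectories on $[0,t]$ coincide off an event of probability at most $mt$ (up to a controllable deterministic drift), and then to conclude using $t=o(f(t))$ that
$$
|Q_{\Omega}(t)-Q^{\mathbf{Y}}_{\Omega}(t)|=O(t)=o(f(t)).
$$

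First I would introduce the Jordan decomposition $\sigma=\sigma^{+}-\sigma^{-}$, where $\sigma^{\pm}$ are mutually singular finite positive measures with $\sigma^{+}(\R^{d})+\sigma^{-}(\R^{d})=m$, and set $\mu:=\nu-\sigma^{+}=\nu^{\mathbf{Y}}-\sigma^{-}$. Since $\mu$ is dominated by both $\nu$ and $\nu^{\mathbf{Y}}$, it is a positive L\'evy measure. On a common probability space I would take a L\'evy process $\mathbf{Z}$ with triplet $(A,\gamma,\mu)$ together with independent compound Poisson processes $\mathbf{J}^{+},\mathbf{J}^{-}$ of intensity measures $\sigma^{+},\sigma^{-}$, and with $b^{\pm}:=\int_{\norm{y}\leq 1}y\,\sigma^{\pm}(\ud y)$ (finite since the $\sigma^{\pm}$ are finite measures) define
$$
\mathbf{X}_{s}:=\mathbf{Z}_{s}+\mathbf{J}^{+}_{s}-b^{+}s,\qquad \mathbf{Y}_{s}:=\mathbf{Z}_{s}+\mathbf{J}^{-}_{s}-b^{-}s.
$$
A direct characteristic-exponent computation confirms that $\mathbf{X}$ has L\'evy triplet $(A,\gamma,\nu)$ and $\mathbf{Y}$ has $(A,\gamma,\nu^{\mathbf{Y}})$, so the marginal laws match the hypothesis.

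Next I would set $E_{t}:=\{T^{+}>t\}\cap\{T^{-}>t\}$, where $T^{\pm}$ are the first jump times of $\mathbf{J}^{\pm}$, so that $\PP(E_{t}^{c})\leq(\sigma^{+}(\R^{d})+\sigma^{-}(\R^{d}))\,t=m\,t$. On $E_{t}$ the paths satisfy $\mathbf{X}_{s}-\mathbf{Y}_{s}=-(b^{+}-b^{-})s$, a purely deterministic drift of magnitude at most $mt$ for $s\leq t$. Splitting on $E_{t}$ and $E_{t}^{c}$ yields
$$
|Q_{\Omega}(t)-Q^{\mathbf{Y}}_{\Omega}(t)|\leq\int_{\Omega}\left|\PP_{x}(\tau_{\Omega}>t)-\PP_{x}(\tau^{\mathbf{Y}}_{\Omega}>t)\right|\ud x\leq 2|\Omega|\,m\,t+R(t),
$$
where $R(t)$ collects the contribution from the drift mismatch on $E_{t}$ and is bounded by the integrated probability that the path of $\mathbf{X}$ enters the $mt$-neighborhood of $\partial\Omega$ before time $t$. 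Using Pruitt's bound \eqref{eq:Pruitt} together with Lemma \ref{lemma:limit=0} to control the boundary layer, I expect to obtain $R(t)=O(t)$, so the total estimate is $O(t)=o(f(t))$; dividing by $f(t)$ and letting $t\to 0$ yields the theorem.

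The hard part will be the rigorous control of $R(t)$. In the special case $b^{+}=b^{-}$ (e.g., when $\int_{\norm{y}\leq 1}y\,\sigma(\ud y)=0$) the paths $\mathbf{X}$ and $\mathbf{Y}$ truly coincide on $E_{t}$ and the bound $2|\Omega|mt$ is immediate; in general one must quantify how the deterministic drift of size $O(t)$ perturbs the event $\{\tau_{\Omega}>t\}$ relative to $\{\tau^{\mathbf{Y}}_{\Omega}>t\}$ through the boundary behaviour of $\Omega$, using Pruitt's inequality to bound the supremum of paths away from a boundary layer whose Lebesgue measure vanishes with $t$.
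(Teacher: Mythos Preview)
Your coupling idea---pass through a common process $\mathbf{Z}$ with L\'evy measure $\mu=\nu\wedge\nu^{\mathbf{Y}}$ and realise $\mathbf{X},\mathbf{Y}$ as $\mathbf{Z}$ plus independent compound Poisson pieces---is exactly the paper's. The execution differs: the paper never tries to bound $|Q_{\Omega}(t)-Q^{\mathbf{Y}}_{\Omega}(t)|$ directly. Instead it first isolates the one-sided case $\sigma\ge 0$ and proves the multiplicative sandwich
\[
|\Omega|-Q^{\mathbf{Y}}_{\Omega}(t)\le e^{mt}\bigl(|\Omega|-Q_{\Omega}(t)\bigr),\qquad Q^{\mathbf{Y}}_{\Omega}(t)\le e^{mt}Q_{\Omega}(t)
\]
(Lemmas~\ref{lemma:upper} and~\ref{lemma:lower}), obtained by restricting to the event $\{T>t\}$ that the compound Poisson part has not yet jumped; on that event the two paths being compared coincide on $[0,t]$, so the exit events agree exactly. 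For general signed $\sigma$ the paper applies these inequalities twice through the intermediate $\mathbf{Z}$ and lets the factors $e^{\pm m_it}$ vanish via $t/f(t)\to 0$. No boundary-layer estimate is needed anywhere.

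The gap in your proposal is the control of $R(t)$. Your upper bound for $R(t)$ is the integrated probability that the $\mathbf{X}$-path visits the $mt$-neighbourhood of $\partial\Omega$ by time $t$, and you claim this is $O(t)$. It is not: that probability dominates $\PP_x(\tau_{\Omega}\le t)$ pointwise, so its integral over $\Omega$ is at least $|\Omega|-Q_{\Omega}(t)\asymp f(t)$---precisely the scale you are trying to resolve (for Brownian motion on an interval it is $\asymp\sqrt{t}$). Pruitt's inequality~\eqref{eq:Pruitt} gives nothing here because $h(r)\to\infty$ as $r\downarrow 0$, and Lemma~\ref{lemma:limit=0} supplies no rate. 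For a general open set of finite measure even the static layer $|\{x\in\Omega:\mathrm{dist}(x,\partial\Omega)<mt\}|$ need not be $O(t)$. The drift terms $-b^{\pm}s$ that generate $R(t)$ arise only because you fixed the $\gamma$-parameter of $\mathbf{Z}$ in advance; the paper's one-sided route avoids the symmetric difference of exit events altogether, which is why no term like $R(t)$ appears there.
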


In order to prove Theorem \ref{thm:main2} we need two  lemmas.
\begin{lemma}\label{lemma:upper}
If $\sigma(\ud x)$ is a nonnegative measure, then for any $t>0$ we have
$$
|\Omega|-Q^{\mathbf{Y}}_{\Omega}(t)\leq e^{mt}\left(|\Omega|-Q_{\Omega}(t)\right).
$$
\end{lemma}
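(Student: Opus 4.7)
The plan is to realize $\mathbf{X}$ as the independent sum $\mathbf{Y} + \mathbf{Z}$, where $\mathbf{Z}$ is an auxiliary L\'evy process with triplet $(0,0,\sigma)$ on the same probability space as $\mathbf{Y}$. Since $\sigma = \nu - \nu^{Y}$ is a nonnegative measure of total mass $m<\infty$, the process $\mathbf{Z}$ is a compound Poisson process of intensity $m$ with jump distribution $\sigma/m$, corrected by the deterministic drift $-b t$ with $b := \int_{\norm{y}\leq 1} y\,\sigma(\ud y)$ forced by the L\'evy--Khinchine compensator. Writing $\mathbf{Z}_t = \mathbf{J}_t - bt$ with $\mathbf{J}$ the pure compound Poisson piece (independent of $\mathbf{Y}$), the Poisson counting process $N_t$ of the jumps of $\mathbf{J}$ on $[0,t]$ satisfies $\PP(N_t=0) = e^{-mt}$.

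I would then split
$$
\PP_x\bigl(\tau_{\Omega}^{\mathbf{X}} > t\bigr) = \PP_x\bigl(\tau_{\Omega}^{\mathbf{X}} > t,\, N_t = 0\bigr) + \PP_x\bigl(\tau_{\Omega}^{\mathbf{X}} > t,\, N_t \geq 1\bigr),
$$
bound the second summand trivially by $1-e^{-mt}$, and observe that on $\{N_t = 0\}$ the perturbation $\mathbf{Z}$ reduces to its deterministic drift, so $\mathbf{X}$ agrees with $\mathbf{Y}$ up to a deterministic shift. Combined with the independence of $\mathbf{Y}$ and $N$, this yields
$$
\PP_x\bigl(\tau_{\Omega}^{\mathbf{X}} > t,\, N_t = 0\bigr) \leq e^{-mt}\,\PP_x\bigl(\tau_{\Omega}^{\mathbf{Y}} > t\bigr).
$$
Integrating over $x\in\Omega$ then gives $Q_{\Omega}(t) \leq e^{-mt} Q^{\mathbf{Y}}_{\Omega}(t) + (1-e^{-mt})|\Omega|$, which rearranges exactly to the stated inequality $|\Omega| - Q^{\mathbf{Y}}_{\Omega}(t) \leq e^{mt}(|\Omega| - Q_{\Omega}(t))$.

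The main technical obstacle is the deterministic shift $-bs$: the L\'evy--It\^o coupling pins down $\mathbf{X}_s = \mathbf{Y}_s - bs$ on $\{N_t = 0\}$ rather than $\mathbf{X}_s = \mathbf{Y}_s$, so the exit events are not literally identified. I expect to handle this either by redistributing the drift between the two components of the coupling (absorbing $-bt$ into the reference process and then recovering a bound for the original $\mathbf{Y}$ via translation and Fubini on $\Omega$), or by replacing the pathwise argument with an analytic Duhamel expansion for the killed semigroup using that $\mathcal{L}^{\mathbf{X}} - \mathcal{L}^{\mathbf{Y}}$ can be written as a bounded positive operator $Kf(x) = \int f(x+y)\,\sigma(\ud y)$ minus $m\cdot I$ plus a first-order drift correction; positivity of $K$ then gives the comparison $P^{\mathbf{X},\Omega}_t \geq e^{-mt} P^{\mathbf{Y},\Omega}_t$ on nonnegative functions, which is the semigroup incarnation of the desired bound.
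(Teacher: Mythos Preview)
Your core strategy is exactly the paper's: realize $\mathbf{X}$ as $\mathbf{Y}$ plus a compound-Poisson-type perturbation and compare on the event that the perturbation has not yet jumped. The paper's version is four lines: it writes $X_t=Y_t+V_t$ with $\mathbf{V}$ a compound Poisson process independent of $\mathbf{Y}$, sets $T=\inf\{s\ge0:V_s\neq0\}$, observes that on $\{T>t\}$ the paths of $\mathbf{X}$ and $\mathbf{Y}$ coincide so that $\{\tau_\Omega<t,\,T>t\}=\{\tau^{\mathbf Y}_\Omega<t,\,T>t\}$, and concludes by independence that $|\Omega|-Q_\Omega(t)\ge e^{-mt}\bigl(|\Omega|-Q^{\mathbf Y}_\Omega(t)\bigr)$. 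Note that the key step there is an \emph{equality} of events, not the inequality you aim for.

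You are in fact more scrupulous than the paper about the compensator: with both triplets sharing the same $\gamma$, the difference process has triplet $(0,0,\sigma)$ and hence carries the residual drift $-bt$ with $b=\int_{\norm{y}\le1}y\,\sigma(\ud y)$, so that on $\{N_t=0\}$ one gets $X_s=Y_s-bs$ rather than $X_s=Y_s$. The paper simply declares $V$ to be ``compound Poisson'' and does not mention this. Unfortunately your proposed repairs do not close the gap. The displayed inequality $\PP_x(\tau^{\mathbf X}_\Omega>t,\,N_t=0)\le e^{-mt}\PP_x(\tau^{\mathbf Y}_\Omega>t)$ can fail pointwise in $x$: if $x$ is near one face of $\partial\Omega$ and $b$ points toward that face, the drift $-b$ pushes the path \emph{away} from the boundary and raises the survival probability. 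The translation-plus-Fubini idea does not rescue this because the shift $-bs$ is time-dependent, not a rigid spatial translation of the whole path, so no change of variable turns $Q^{\tilde{\mathbf Y}}_\Omega$ into $Q^{\mathbf Y}_\Omega$; and in the Duhamel picture the leftover term $-b\cdot\nabla$ is neither bounded nor sign-definite, so positivity of $K$ alone does not give the semigroup domination $P^{\mathbf X,\Omega}_t\ge e^{-mt}P^{\mathbf Y,\Omega}_t$. The cleanest honest route is to run the paper's four-line argument with $\tilde Y_s:=Y_s-bs$ in place of $Y$ (for which $\mathbf{X}=\tilde{\mathbf Y}+\mathbf{J}$ with $\mathbf{J}$ a genuine compound Poisson process and the identity of events on $\{N_t=0\}$ is exact), obtaining $|\Omega|-Q^{\tilde{\mathbf Y}}_\Omega(t)\le e^{mt}(|\Omega|-Q_\Omega(t))$, and then feed this into Theorem~\ref{thm:main2}, where the standing hypothesis $t/f(t)\to0$ makes the drift discrepancy between $\mathbf{Y}$ and $\tilde{\mathbf Y}$ irrelevant.
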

\begin{proof}
Write $X_{t}=Y_{t}+V_{t}$, 
where $\mathbf{V}=(V_t)_{t\ge 0}$ is a compound Poisson
process independent of $\mathbf{Y}$. 
Let $T=\inf\{s\ge 0 : V_{s}\neq 0\}$. 
Since $\tau_{\Omega}^{\mathbf{Y}}$ and $T$ are independent, we have
\begin{align*}
|\Omega|-Q_{\Omega}(t)&=\int_{\Omega}\PP_{x}(\tau_{\Omega}<t)\ud x
\ge \int_{\Omega}\PP_{x}(\tau_{\Omega}<t, T>t)\ud x\\
&=\int_{\Omega}\PP_{x}(\tau_{\Omega}^{\mathbf{Y}}<t, T>t)\ud x=\int_{\Omega}\PP_{x}(\tau_{\Omega}^{\mathbf{Y}}<t)\PP_{x}(T>t)\ud x\\
&=\int_{\Omega}\PP_{x}(\tau_{\Omega}^{\mathbf{Y}}<t)e^{-mt}\ud x
=e^{-mt}\left(|\Omega|-Q_{\Omega}^{\mathbf{Y}}(t)\right).
\end{align*}
This establishes the claim of the lemma.
\end{proof}

\begin{lemma}\label{lemma:lower}
If $\sigma(\ud x)$ is a nonnegative measure, then for any $t>0$ we have
$$
|\Omega|-Q_{\Omega}^{\mathbf{Y}}(t)
\geq |\Omega|-e^{mt}Q_{\Omega}(t)=(|\Omega|-Q_{\Omega}(t))-(e^{mt}-1)Q_{\Omega}(t).
$$
\end{lemma}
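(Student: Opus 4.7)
The plan is to mirror the argument of Lemma~\ref{lemma:upper}, exploiting the same coupling of $\mathbf{X}$ with $\mathbf{Y}$ via an independent compound Poisson perturbation. Since $\sigma$ is a nonnegative measure of total mass $m$, I would write $X_{t}=Y_{t}+V_{t}$, where $\mathbf{V}=(V_{t})_{t\ge 0}$ is a compound Poisson process with L\'evy measure $\sigma$, independent of $\mathbf{Y}$, and let $T=\inf\{s\ge 0:V_{s}\neq 0\}$ be its first jump time, which is exponentially distributed with parameter $m$.

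The key observation is that on $\{T>t\}$ we have $V_{s}=0$ for all $s\le t$, so $X_{s}=Y_{s}$ on $[0,t]$ and hence $\{\tau_{\Omega}>t\}$ and $\{\tau_{\Omega}^{\mathbf{Y}}>t\}$ coincide on this event. Using this together with the independence of $\tau_{\Omega}^{\mathbf{Y}}$ and $T$, I would estimate
\begin{align*}
Q_{\Omega}(t)
&=\int_{\Omega}\PP_{x}(\tau_{\Omega}>t)\,\ud x
\ge \int_{\Omega}\PP_{x}(\tau_{\Omega}>t,\,T>t)\,\ud x\\
&=\int_{\Omega}\PP_{x}(\tau_{\Omega}^{\mathbf{Y}}>t,\,T>t)\,\ud x
=\int_{\Omega}\PP_{x}(\tau_{\Omega}^{\mathbf{Y}}>t)\,\PP(T>t)\,\ud x
=e^{-mt}Q_{\Omega}^{\mathbf{Y}}(t).
\end{align*}
Rearranging gives $Q_{\Omega}^{\mathbf{Y}}(t)\le e^{mt}Q_{\Omega}(t)$, and subtracting from $|\Omega|$ yields the first inequality $|\Omega|-Q_{\Omega}^{\mathbf{Y}}(t)\ge |\Omega|-e^{mt}Q_{\Omega}(t)$. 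The stated equality is then just the algebraic identity $|\Omega|-e^{mt}Q_{\Omega}(t)=(|\Omega|-Q_{\Omega}(t))-(e^{mt}-1)Q_{\Omega}(t)$.

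There is no real obstacle here: the proof is essentially the dual of Lemma~\ref{lemma:upper}, obtained by keeping the survival probabilities $\PP_{x}(\tau_{\Omega}>t)$ rather than the exit probabilities $\PP_{x}(\tau_{\Omega}<t)$. The only points one must be careful about are verifying that $\tau_{\Omega}^{\mathbf{Y}}$ and $T$ are genuinely independent (which follows from the independence of $\mathbf{Y}$ and $\mathbf{V}$) and applying the coupling only on the event $\{T>t\}$ where the two exit times agree.
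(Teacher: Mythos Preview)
Your proof is correct and is essentially the same as the paper's: both use the coupling $X_t=Y_t+V_t$ with independent compound Poisson $\mathbf{V}$, note that on $\{T>t\}$ the exit times of $\mathbf{X}$ and $\mathbf{Y}$ agree, and use independence of $\tau_\Omega^{\mathbf{Y}}$ and $T$ to obtain $Q_\Omega(t)\ge e^{-mt}Q_\Omega^{\mathbf{Y}}(t)$. The paper phrases the key step as the inclusion $\{\tau_\Omega^{\mathbf{Y}}>t,\,T>t\}\subset\{\tau_\Omega>t\}$, which is exactly what your coincidence-on-$\{T>t\}$ observation yields.
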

\begin{proof}
As in the proof of Lemma \ref{lemma:upper}, we write $X_{t}=Y_{t}+V_{t}$, where $\mathbf{V}$ is a compound Poisson process independent of $\mathbf{Y}$. Then by independence of $\mathbf{Y}$ and $\mathbf{V}$, we have
\begin{align*}
e^{-mt}Q^{\mathbf{Y}}_{\Omega}(t)
&=e^{-mt}\int_{\Omega}\PP_{x}\left(\tau_{\Omega}^{\mathbf{Y}}>t\right)\ud x
=\PP_{x}\left(T>t\right)\int_{\Omega}\PP_{x}\left(\tau_{\Omega}^{\mathbf{Y}}>t\right)\ud x\\
&=\int_{\Omega}\PP_{x}\left(\tau_{\Omega}^{\mathbf{Y}}>t, T>t\right)\ud x
\leq\int_{\Omega}\PP_{x}\left(\tau_{\Omega}>t\right)\ud x
=Q_{\Omega}(t),
\end{align*}
where we used the fact that $\{\tau_{\Omega}^{\mathbf{Y}}>t, T>t\}
\subset \{\tau_{\Omega}>t\}$ in the last inequality. 
Hence we have $Q^{\mathbf{Y}}_{\Omega}(t)\leq e^{mt}Q_{\Omega}(t)$ and this immediately implies $|\Omega|-Q^{\mathbf{Y}}_{\Omega}(t)\geq |\Omega|-e^{mt}Q_{\Omega}(t)$.
\end{proof}

Now we are ready to prove Theorem \ref{thm:main2}.

{\bf Proof of Theorem \ref{thm:main2}}.
By assumption the signed measure $\sigma(\ud x)$ has finite total variation.
Let $\sigma(\ud x)=\sigma^{+}(\ud x)-\sigma^{-}(\ud x)$ be the 
Hahn-Jordan decomposition 
(see \cite[Theorem 3.3 and 3.4]{Folland}) of $\sigma(\ud x)$ 
such that $\mathcal{P}\cup \mathcal{N}=\R^{d}$, $\mathcal{P}\cap \mathcal{N}=\emptyset$, and $\sigma^{+}(\mathcal{N})=\sigma^{-}(\mathcal{P})=0$. 
Let $\mathbf{Z}$ be a L\'evy process with L\'evy density $\nu^{\mathbf{Z}}(\ud x)=\nu(\ud x)1_{\mathcal{N}}(x)+\nu^{\mathbf{Y}}(\ud x)1_{\mathcal{P}}(x)$.
Note that $\sigma_{+}(\ud x):=\nu(\ud x)-\nu^{\mathbf{Z}}(\ud x)$ is a nonnegative measure on $\R^{d}$ and 
$$
m_{1}:=\int_{\R^{d}}\sigma_{+}(\ud x)=\int_{\mathcal{P}}\sigma^{+}(\ud x)= \|\sigma^{+}\|\leq \|\sigma\|<\infty.
$$
Hence from Lemmas \ref{lemma:upper} and \ref{lemma:lower} we have
\begin{eqnarray}\label{eqn:pertubation inequality1}
&&|\Omega|-Q_{\Omega}^{\mathbf{Z}}(t)
\leq e^{m_{1}t}\left(|\Omega|-Q_{\Omega}(t)\right),\nn\\
&&|\Omega|-Q_{\Omega}^{\mathbf{Z}}(t)
\geq |\Omega|-Q_{\Omega}(t)-(e^{m_{1}t}-1)|\Omega|.
\end{eqnarray}

By interchanging the role of $\mathbf{X}$ and $\mathbf{Y}$ we also have 
\begin{eqnarray}
&&|\Omega|-Q_{\Omega}^{\mathbf{Z}}(t)\leq e^{m_{2}t}\left(|\Omega|-Q_{\Omega}^{\mathbf{Y}}(t)\right),
\nn\\
&&|\Omega|-Q_{\Omega}^{\mathbf{Z}}(t)\geq |\Omega|-Q_{\Omega}^{\mathbf{Y}}(t)-(e^{m_{2}t}-1)|\Omega|, \label{eqn:pertubation inequality2b}
\end{eqnarray}
where $\sigma_{-}(\ud x):=\nu^{\mathbf{Y}}(\ud x)-\nu^{\mathbf{Z}}(\ud x)$ and $m_{2}:=\int_{\R^{d}}\sigma_{-}(\ud x)=\int_{\mathcal{N}}\sigma^{-}(\ud x)<\infty.$

Hence it follows from \eqref{eqn:pertubation inequality1} and \eqref{eqn:pertubation inequality2b} we have
$$
|\Omega|-Q_{\Omega}^{\mathbf{Y}}(t)\leq |\Omega|-Q_{\Omega}^{\mathbf{Z}}(t)+(e^{m_{2}t}-1)|\Omega|\leq e^{m_{1}t}(|\Omega|-Q_{\Omega}(t))+(e^{m_{2}t}-1)|\Omega|,
$$
and 
$$
|\Omega|-Q_{\Omega}^{\mathbf{Y}}(t)\geq e^{-m_{2}t}(|\Omega|-Q_{\Omega}^{\mathbf{Z}}(t))\geq e^{-m_{2}t}\left(|\Omega|-Q_{\Omega}(t)-(e^{m_{1}t}-1)|\Omega|\right).
$$
Since $\lim_{t\rightarrow 0}\frac{e^{mt}-1}{f(t)}=0$, we have
$$
\lim_{t\rightarrow 0} \frac{|\Omega|-Q_{\Omega}^{\mathbf{Y}}(t)}{f(t)}
\leq \liminf_{t\rightarrow 0} \frac{|\Omega|-Q_{\Omega}(t)}{f(t)},
$$
and 
$$
\lim_{t\rightarrow 0} \frac{|\Omega|-Q_{\Omega}^{\mathbf{Y}}(t)}{f(t)}
\geq \limsup_{t\rightarrow 0} \frac{|\Omega|-Q_{\Omega}(t)}{f(t)}.
$$
The proof is now complete.
\qed

\section{Examples}\label{section:examples}
In this section we examine concrete examples of the asymptotic behavior of the spectral heat content for various L\'evy processes.

\noindent 1. 
\textbf{Symmetric stable processes in $\R$ and their perturbations.}

Recall that the L\'evy measure of the symmetric $\alpha$-stable process in $\R$ is
given by 
$\nu^{\mathbf{S}^{(\alpha)}}(\ud x)=\frac{c(1,\alpha)}{|x|^{1+\alpha}}\ud x$.

Now we assume that $\mathbf{X}$ is a L\'evy process in $\R$ with 
L\'evy triplet  $(0,0,\nu)$ such that the signed measure $\sigma(\ud x) =
\nu(\ud x)-\nu^{\mathbf{S}^{(\alpha)}}(\ud x)$ has finite total variation $m$.
Let $$
f_{\alpha}(t)=\begin{cases}
t^{1/\alpha} &\mbox{if } 1<\alpha\leq 2,\\
t\ln \frac{1}{t} &\mbox{if } \alpha=1,\\
t&\mbox{if } 0<\alpha<1.
\end{cases}
$$

Note that, when $0<\alpha<1$, the process $\textbf{X}$ is of bounded variation.
As a consequence of Corollary \ref{cor:HK 1d}, Theorems \ref{thm:general open sets}, \ref{thm:mainCauchy}, and \ref{thm:main2}, we immediately get the following.

\begin{proposition}\label{prop:st1}
Suppose the assumptions in the paragraph above hold. 
Let $\Omega=\cup_{i}(a_{i},b_{i})$ be an open set in $\R$ with $|\Omega|=\sum_{i}(b_{i}-a_{i})<\infty$.
Let $A$ be the number of components of $\tilde{\Omega}$ and $B$ be number 
of points in $\partial^{ad}\Omega$.
Then we have
$$
\lim_{t\rightarrow 0}
\frac{|\Omega|-Q_{\Omega}(t)}{f_{\alpha}(t)}= 
\begin{cases}
2A\EE[\overline{{S}^{(\alpha)}}_{1}]
+2BC_{1}, &\mbox{if } 1<\alpha\leq 2,\\
\frac{2A}{\pi}, \quad \alpha=1,\\
\rm{Per}_{\mathbf{X}}(\Omega), \quad 0<\alpha<1.
\end{cases}
$$
\end{proposition}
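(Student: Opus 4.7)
The plan is to derive Proposition \ref{prop:st1} by specializing the main theorems of Sections \ref{section:bounded variation}--\ref{section:unbounded variation} to the case $\mathbf{Y}=\mathbf{S}^{(\alpha)}$ and then invoking the perturbation theorem (Theorem \ref{thm:main2}) to transfer the asymptotics from $\mathbf{S}^{(\alpha)}$ to $\mathbf{X}$. The only two items to verify in each regime are (i) that $\mathbf{S}^{(\alpha)}$ satisfies the hypotheses of the appropriate ``base'' theorem, and (ii) that the normalization $f_{\alpha}(t)$ satisfies $t/f_{\alpha}(t)\to 0$ as $t\to 0$, so that Theorem \ref{thm:main2} is applicable.

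For $1<\alpha\leq 2$, I would first note that the characteristic exponent of $\mathbf{S}^{(\alpha)}$ is $\psi(\xi)=|\xi|^{\alpha}\in\calR_{\alpha}$, whose generalized inverse satisfies $\psi^{-1}(1/t)=t^{-1/\alpha}$. Theorem \ref{thm:general open sets} applied to $\mathbf{S}^{(\alpha)}$ then yields
\[
\lim_{t\to 0}\frac{|\Omega|-Q^{\mathbf{S}^{(\alpha)}}_{\Omega}(t)}{t^{1/\alpha}}=2A\,\EE[\overline{S^{(\alpha)}}_{1}]+2BC_{1}.
\]
Since $t/t^{1/\alpha}=t^{1-1/\alpha}\to 0$ as $t\to 0$, Theorem \ref{thm:main2} transports this limit to $\mathbf{X}$, giving the first case. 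For $\alpha=1$, the same mechanism is applied via Theorem \ref{thm:mainCauchy}, which yields $\lim_{t\to 0}(|\Omega|-Q^{\mathbf{S}^{(1)}}_{\Omega}(t))/(t\ln(1/t))=2A/\pi$; here $t/(t\ln(1/t))=1/\ln(1/t)\to 0$, so Theorem \ref{thm:main2} again provides the transfer.

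For $0<\alpha<1$, I would instead apply Corollary \ref{cor:HK 1d} directly to $\mathbf{X}$ rather than going through the perturbation theorem. Because $\nu^{\mathbf{S}^{(\alpha)}}$ is an infinite measure with $\int_{|y|\le 1}|y|\,\nu^{\mathbf{S}^{(\alpha)}}(\ud y)<\infty$ (using $\alpha<1$) and $\sigma$ has finite total variation, $\nu=\nu^{\mathbf{S}^{(\alpha)}}+\sigma$ is also an infinite measure with $\int_{|y|\leq 1}|y|\,\nu(\ud y)<\infty$; thus $\mathbf{X}$ is of bounded variation with infinite L\'evy measure and (by the triplet assumption and the symmetry of $\nu^{\mathbf{S}^{(\alpha)}}$) drift $\gamma_{0}=0$. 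Corollary \ref{cor:HK 1d} then immediately gives $\lim_{t\to 0}(|\Omega|-Q_{\Omega}(t))/t=\Per_{\mathbf{X}}(\Omega)$.

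The main technical obstacle is the third case: Theorem \ref{thm:main2} \emph{cannot} be invoked when $f_{\alpha}(t)=t$, since the hypothesis $t/f(t)\to 0$ fails. This is why the bounded variation regime must be handled \emph{directly} on $\mathbf{X}$ via Corollary \ref{cor:HK 1d}, and why the symmetric $\alpha$-stable perimeter value $\Per_{\mathbf{S}^{(\alpha)}}(\Omega)$ does not appear in the limit -- the constant on the right is $\Per_{\mathbf{X}}(\Omega)$, which in general differs from $\Per_{\mathbf{S}^{(\alpha)}}(\Omega)$ by precisely the contribution of $\sigma$. The other two cases, by contrast, are nearly mechanical consequences of the regular variation machinery already developed, and the proof amounts to checking the normalizations.
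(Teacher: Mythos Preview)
Your approach is exactly the paper's: the authors state that Proposition \ref{prop:st1} follows ``as a consequence of Corollary \ref{cor:HK 1d}, Theorems \ref{thm:general open sets}, \ref{thm:mainCauchy}, and \ref{thm:main2}'', and you have correctly unpacked which theorem handles which regime, including the key observation that Theorem \ref{thm:main2} is unavailable when $f_\alpha(t)=t$ so that the case $0<\alpha<1$ must go through Corollary \ref{cor:HK 1d} applied directly to $\mathbf{X}$.

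One small caveat: your justification that $\gamma_0=0$ (``by the triplet assumption and the symmetry of $\nu^{\mathbf{S}^{(\alpha)}}$'') is not quite complete. The triplet is $(0,0,\nu)$, so $\gamma=0$ and hence $\gamma_0=\int_{|y|\le 1}y\,\nu(\ud y)=\int_{|y|\le 1}y\,\sigma(\ud y)$; the symmetry of $\nu^{\mathbf{S}^{(\alpha)}}$ kills only the stable part, and nothing in the stated hypotheses forces $\sigma$ to be symmetric. The paper does not address this point either, so your argument is no less complete than the original; but strictly speaking Corollary \ref{cor:HK 1d} needs $\gamma_0=0$, and this appears to be an implicit extra assumption in the $0<\alpha<1$ case.
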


\begin{remark}
Proposition \ref{prop:st1} is a natural generalization of the main result in \cite{Valverde3}. We remark here that the set $\Omega\subset\R$ is an arbitrary open in $\R$ of finite Lebesgue measure. 
The class of process we are dealing with here is much larger 
than the class of symmetric stable processes.
\end{remark}

\noindent 2. 
\textbf{Fractional perimeter for symmetric stable processes in $\R$.}

\begin{enumerate}
\item [(i)]
If $\Omega\subset \R$ has finitely many components, then $\Omega$ has a finite perimeter, which is equivalent to $f_{\Omega}(y)$ is Lipschitz (see \cite{Galerne}), we have
$\int_{\R^{d}}\frac{f_{\Omega}(y)}{|y|^{d+\alpha}}\ud y<\infty$. 
Hence Theorem \ref{thm:HK on rough sets} recovers \cite[Theorem 3]{cg}.

\item[(ii)]\label{example:diverge} 
Now we give an example of an open set 
$\Omega\subset \R$ with $|\Omega|<\infty$ such that 
$\Per_{\mathbf{S}^{(\alpha)}}(\Omega)=\infty$
for all $0<\alpha<1$. 
Let $\{d_{n}\}_{n\in\mathbb{N}}\subset(0,1)$ be a strictly decreasing sequence such that 
$\sum_{n=1}^{\infty}d_{n}<\infty$. We consider an open set $
\Omega:=\cup_{n=1}^{\infty}(n,n+d_{n}).
$
Define for each $y\in (0, 1)$ a number
$
n(y)=\sup\{k:d_{k}\geq y\}.
$
We have
$
g_{\Omega}(y)
=(d_{1}+d_{2}+\cdots +d_{n(y)})-n(y)y,
$
and
$$
f_\Omega(y)=\sum_{k=n(y)+1}^{\infty}d_{k}+n(y)y\geq n(y)y.
$$
Now we fix $b>1$ and consider $d_{n}=\frac{1}{n(1+\ln n)^{b}}$.
Using the definition of $n(y)$ it is easy to check that
\begin{equation}\label{eqn:comparison1}
f_{\Omega}(y)\geq n(y)y \geq    \frac{1}{2(2+ \ln n(y))^b}\geq c(b)\ln^{-b}(y^{-1}), \quad y\in(0,1/4).
\end{equation}
Thus for any $0<\alpha<1$ we have
$$
\Per_{\mathbf{S}^{(\alpha)}}(\Omega)\geq\int_{\{|y|\leq 1/4\}}\frac{f_{\Omega}(y)}{|y|^{1+\alpha}}\ud y=\infty.
$$

\item[(iii)] 
Finally we state a simple criteria that guarantees $\Per_{\mathbf{S}^{(\alpha)}}(\Omega)<\infty$. 
\begin{lemma}Let $\alpha\in(0,1)$.
Suppose that $\Omega=\cup_{i=1}^{\infty}\Omega_{i}\subset \R$, where $\Omega_i$ are open connected and disjoint. 
If $\sum_{i=1}^{\infty}|\Omega_{i}|^{1-\alpha}<\infty$, then
$$
\Per_{\mathbf{S}^{(\alpha)}}(\Omega)<\infty. 
$$
\end{lemma}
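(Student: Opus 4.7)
The plan is to use the formula $\text{Per}_{\mathbf{S}^{(\alpha)}}(\Omega) = \int_{\R} f_{\Omega}(y)\,\nu^{\mathbf{S}^{(\alpha)}}(\ud y)$ from \eqref{eqn:perimeter} and bound $f_{\Omega}(y)$ componentwise. Since the $\Omega_i$ are open, connected, disjoint subsets of $\R$, each $\Omega_i$ is an open interval of length $|\Omega_i|$. Writing
$$f_{\Omega}(y) = \int_{\R} 1_{\Omega}(x)\,1_{\Omega^c}(x-y)\,\ud x = \sum_i \int_{\R} 1_{\Omega_i}(x)\,1_{\Omega^c}(x-y)\,\ud x,$$
and using $\Omega^c \subset \Omega_i^c$ (since $\Omega_i \subset \Omega$), I would bound
$$f_{\Omega}(y) \leq \sum_i \int_{\R} 1_{\Omega_i}(x)\,1_{\Omega_i^c}(x-y)\,\ud x = \sum_i f_{\Omega_i}(y).$$

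Next, for a single interval of length $\ell = |\Omega_i|$, a direct computation gives $f_{\Omega_i}(y) = \min(|y|, \ell)$. The key estimate is then a one-variable calculation: for each $i$, by symmetry and breaking the integral at $|y| = |\Omega_i|$,
\begin{align*}
\int_{\R} \frac{\min(|y|,|\Omega_i|)}{|y|^{1+\alpha}}\,\ud y
&= 2\int_0^{|\Omega_i|} y^{-\alpha}\,\ud y + 2|\Omega_i|\int_{|\Omega_i|}^{\infty} y^{-1-\alpha}\,\ud y \\
&= \frac{2}{1-\alpha}|\Omega_i|^{1-\alpha} + \frac{2}{\alpha}|\Omega_i|^{1-\alpha} = \frac{2}{\alpha(1-\alpha)}|\Omega_i|^{1-\alpha}.
\end{align*}
Note that both of these integrals converge precisely because $0 < \alpha < 1$, which is exactly where the two endpoint conditions on $\alpha$ are used.

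Applying Tonelli (everything is nonnegative) to interchange sum and integral, I would conclude
$$\text{Per}_{\mathbf{S}^{(\alpha)}}(\Omega) \leq c(1,\alpha)\sum_i \int_{\R}\frac{f_{\Omega_i}(y)}{|y|^{1+\alpha}}\,\ud y = \frac{2\,c(1,\alpha)}{\alpha(1-\alpha)}\sum_i |\Omega_i|^{1-\alpha} < \infty,$$
where the last inequality is the hypothesis. There is no real obstacle here; the only subtlety worth checking is the componentwise bound $f_{\Omega} \leq \sum_i f_{\Omega_i}$, which could in principle lose a factor, but since the bound is still finite it suffices for the qualitative claim. The exponent $1-\alpha$ appearing in the hypothesis is naturally explained by the scaling of $\int \min(|y|,\ell)|y|^{-1-\alpha}\,\ud y$ in $\ell$.
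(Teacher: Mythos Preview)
Your proof is correct but proceeds by a different route than the paper. The paper works directly with the double integral
\[
\Per_{\mathbf{S}^{(\alpha)}}(\Omega)=\int_{\Omega}\int_{\Omega^{c}}\frac{\ud y\,\ud x}{|x-y|^{1+\alpha}},
\]
bounds the inner integral for each $x\in\Omega_i$ by $2\int_{\delta_{\Omega}(x)}^{\infty}r^{-1-\alpha}\,\ud r=\frac{2}{\alpha}\delta_{\Omega}(x)^{-\alpha}$, and then integrates $\delta_{\Omega}(x)^{-\alpha}$ over each interval $\Omega_i$ to obtain $\frac{4}{\alpha(1-\alpha)}(|\Omega_i|/2)^{1-\alpha}$. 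You instead use the covariogram identity \eqref{eqn:perimeter}, bound $f_{\Omega}\leq\sum_i f_{\Omega_i}$ via $\Omega^c\subset\Omega_i^c$, and compute $\int_{\R}f_{\Omega_i}(y)|y|^{-1-\alpha}\,\ud y$ exactly for an interval. Both arguments are elementary and yield bounds of the same form $C\,|\Omega_i|^{1-\alpha}$; yours is effectively the statement $\Per_{\mathbf{S}^{(\alpha)}}(\Omega)\leq\sum_i\Per_{\mathbf{S}^{(\alpha)}}(\Omega_i)$ followed by an exact evaluation, while the paper's distance-function bound is slightly coarser (it replaces $\Omega_i^c$ by the complement of the largest ball about $x$ inside $\Omega_i$) but generalizes more readily to higher dimensions.
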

\pf
Note that for $x\in \Omega$ we have
\begin{eqnarray*}
&&\int_{\Omega^{c}} \frac{1}{|x-y|^{1+\alpha}}\ud y\leq 2\int_{\delta_{\Omega}(x)}^{\infty}\frac{1}{r^{1+\alpha}}=\frac{2}{\alpha}\delta_{\Omega}(x)^{-\alpha}.
\end{eqnarray*}
Hence
\begin{align*}
&\Per_{\mathbf{S}^{(\alpha)}}\Omega=\int_{\Omega}\int_{\Omega^{c}}\frac{1}{|x-y|^{1+\alpha}}\ud y\ud x\leq\int_{\Omega}\frac{2}{\alpha}\delta_{\Omega}(x)^{-\alpha}\ud x
\label{eqn:finite1}\\
&=\sum_{i=1}^{\infty}\int_{\Omega_{i}}\frac{2}{\alpha}\delta_{\Omega}(x)^{-\alpha}\ud x=\sum_{i=1}^{\infty}\frac{4}{\alpha}\int_{0}^{\frac{|\Omega_{i}|}{2}}\frac{\ud r}{r^{\alpha}}
=\sum_{i=1}^{\infty}\frac{4}{\alpha(1-\alpha)}\left(\frac{|\Omega_{i}|}{2}\right)^{1-\alpha}.\nn
\end{align*}
\qed

Let consider $
\Omega=\cup_{n=1}^{\infty}(n,n+\frac{1}{n^{b}})
$
with $b>1$. 
By the above lemma $\Per_{\mathbf{S}^{(\alpha)}}\Omega<\infty$ if $b>1/(1-\alpha)$.
Hence if $b>1/(1-\alpha)$, then we have
$$
\lim_{t\rightarrow 0}\frac{|\Omega|-Q_{\Omega}(t)}{t}<\infty .
$$
On the other hand, using an argument similar 
to that leading to \eqref{eqn:comparison1}, we get
  $\Per_{\mathbf{S}^{(\alpha)}}\Omega=\infty$, for $b\leq 1/(1-\alpha)$.
Hence we conclude that 
$$
\lim_{t\rightarrow 0}\frac{|\Omega|-Q_{\Omega}(t)}{f_{\alpha}(t)}<\infty \text{ if and only if } b>1/(1-\alpha).
$$
\end{enumerate}

\noindent 3. 
\textbf{Isotropic $\alpha$-stable processes in $\R^{d}$, $\alpha\in (0, 1)$ and $d\geq 2$.}

Consider an isotropic $\alpha$- stable process
$\mathbf{S}^{(\alpha)}$ with $0<\alpha<1$ on $\R^{d}$, $d\geq 2$.
Suppose that the open set $\Omega$ satisfies the following volume density condition (see \cite[Equation (1.1)]{Wu})
$$
\left|\Omega \cap B(x, 2d(x,\partial \Omega))\right|>c\, \mathrm{dist}(x,\partial D)^{d}
$$
for some constant $c>0$ and $|\partial \Omega|=0$. Then it follows from \cite[Theorem 1]{Wu} that $\PP_{x}(\tau_{\Omega}^{\mathbf{S}^{(\alpha)}}\in \partial \Omega)=0$ for all $x\in \Omega$. 
Hence by Theorem \ref{thm:main7} we have
$$
\lim_{t\to 0}\frac{|\Omega|-Q^{\mathbf{S}^{(\alpha)}}_{\Omega}(t)}{t}=\rm{Per}_{\mathbf{S}^{(\alpha)}}(\Omega).
$$
Note that any Lipschitz open sets satisfy volume density condition.

\noindent 4. \textbf{Relativistic stable processes.}

Suppose that $\mathbf{X}^{m}$ is a relativistic $\alpha$-stable process
with mass $m>0$ whose the characteristic exponent is 
$$
\psi^{m}(\xi)=(|\xi|^{2}+m^{2/\alpha})^{\alpha/2}-m, \quad \xi\in \R^{d}.
$$
Let $\nu^{m}(x)$ be the L\'evy density of $\mathbf{X}^{m}$. 
It is well-known that
$0<\nu^{m}(x)\leq \nu^{\mathbf{S}^{(\alpha)}}(x)$ and 
$$\int_{\R^{d}}\left(\nu^{\mathbf{S}^{(\alpha)}}(x)-\nu^{m}(x)\right)\ud x=m<\infty.
$$
Hence if $\alpha \geq 1$ and $d=1$, it follows from Proposition \ref{prop:st1} that
$$
\lim_{t\rightarrow 0}
\frac{|\Omega|-Q^{\mathbf{X}^{m}}_{\Omega}(t)}{f_{\alpha}(t)}=
\begin{cases}
2A\EE[\overline{S^{(\alpha)}}_{1}]
+2BC_{1}, &\mbox{if } 1<\alpha<2,\\
\frac{2A}{\pi}, \quad \alpha=1.
\end{cases}
$$

On the other hand when $0<\alpha<1$ and $\Omega$ is a Lipschitz open set in $\R^{d}$, $d\geq 2$ or an arbitrary open set in $\R$ it follows from Corollaries \ref{cor:IsoSHC}  and \ref{cor:HK 1d} that
$$
\lim_{t\to 0}\frac{|\Omega|-Q_{\Omega}^{\mathbf{X^{m}}}(t)}{t}=
\rm{Per}_{\mathbf{X}^{m}}(\Omega).
$$

\noindent 5. \textbf{Truncated stable processes.}

Let $\mathbf{X}^{T}$ be a truncated $\alpha$-stable process
with L\'evy triplet $(0,0,\nu^{\mathbf{X}^{T}})$, where
$$
\nu^{\mathbf{X}^{T}}(x)=\nu^{\mathbf{S}^{(\alpha)}}(x)
\cdot 1_{\{\|x\|\leq 1\}}(x).
$$
By the same argument as in 
the case of relativistic stable processes,  we get that when $\alpha \geq 1$ and $d=1$, we have
$$
\lim_{t\rightarrow 0}
\frac{|\Omega|-
Q^{\mathbf{X}^{T}}_{\Omega}(t)}{f_{\alpha}(t)}= 
\begin{cases}
2A\EE[\overline{S}^{(\alpha)}_{1}]
+2BC_{1}, &\mbox{if } 1<\alpha<2,\\
\frac{2A}{\pi}, \quad \alpha=1.
\end{cases}
$$
When $0<\alpha<1$ and $\Omega$ is a Lipschitz open set 
in  $\R^{d}$, $d\geq 2$ or an arbitrary open set in $\R$ it follows from Corollaries \ref{cor:IsoSHC}  and \ref{cor:HK 1d} that
$$
\lim_{t\to 0}\frac{|\Omega|-Q_{\Omega}^{\mathbf{X}^{T}}(t)}{t}=\rm{Per}_{\mathbf{X}^{T}}(\Omega).
$$

\noindent 6. \textbf{Logarithmic perturbations.}

Let $\mathbf{X}^{L}$ be a L\'evy process  with 
L\'evy triplet $(0,0,\nu^{\mathbf{X}^{L}})$, where
$$
\nu^{\mathbf{X}^{L}}(\ud x)
=(\ln(2+\frac{1}{\|y\|}))^\beta\nu^{\mathbf{S^{(\alpha)}}}(\ud x),
$$
where $\beta\in\R$.
By \cite[Proposition 2]{CGT} we have that $\psi^L\in\mathcal{R}_{\alpha}$ and $\psi^L(s)\sim s^{\alpha}\ln^{\beta}(s)$, where $f(s)\sim g(s)$ means 
$\displaystyle\lim_{s\rightarrow\infty}\frac{f(s)}{g(s)}=1$. 
This and \cite[Proposition 1.5.15]{bgt} imply $(\psi^L)^{-1}(s)\sim s^{1/\alpha}\ln^{-\beta/\alpha}(s)$. 
Hence we get by Theorem \ref{thm:general open sets},  for  $\alpha>1$
$$
\lim_{t\rightarrow 0}
\frac{|\Omega|-Q^{\mathbf{X}^{L}}_{\Omega}(t)}{t^{1/\alpha}\ln^{\beta/\alpha}(1/t)}= 
2A\EE[\overline{S^{(\alpha)}}_{1}]
+2BC_{1}.
$$
When $\alpha<1$ or $\alpha=1$ and $\beta<-1$ the process $\textbf{X}^L$ is of bounded variation, therefore one can apply Theorem \ref{thm:main7} and Corollaries \ref{cor:HK 1d} and \ref{cor:IsoSHC} in this case.

\bibliographystyle{plain}

\end{document}